\definecolor{lightgray}{gray}{0.9}
\newcommand{\ra}{\rightarrow}
\newtheorem{theorem}{Theorem}[section]
\newtheorem{lem}[theorem]{Lemma}
\newtheorem{proposition}[theorem]{Proposition}
\theoremstyle{definition}
\newtheorem{dfn}[theorem]{Definition}
\newtheorem{ex}[theorem]{Example}
\newtheorem{rmk}[theorem]{Remark}
\newtheorem{ntn}[theorem]{Notation}
\numberwithin{theorem}{section}
\newcommand{\cG}{\mathcal{G}}
\newcommand{\Oh}{\mathcal{O}}
\newcommand{\Winit}{W_{\mathrm{init}}}
\definecolor{darkgreen}{rgb}{0.0,0.7,0.0}
\newenvironment{LC}{\noindent\color{darkgreen} LC:}{}
\newenvironment{AL}{\noindent\color{red} AL:}{}
\renewenvironment{proof}[1][]{\begin{trivlist}
\item[\hskip \labelsep {\bfseries Proof  \def\temp{#1}\ifx\temp\empty  #1\else  (#1)\fi
}]}{\hfill\(\square\) \end{trivlist}}
\DeclareMathOperator{\BS}{BS}
\DeclareMathOperator{\id}{id}
\DeclareMathOperator{\im}{im}
\DeclareMathOperator{\End}{End}
\newcommand{\NSPACE}{\ensuremath{\mathsf{NSPACE}}}
\newcommand{\ns}{\mathsf{NSPACE}}
\title{Languages, groups and equations}
\author{Laura Ciobanu and Alex Levine}
\address{Department of Mathematics, Heriot-Watt University, and the Maxwell Institute for Mathematical Sciences,
Edinburgh EH14 4AS, UK}
\email{L.Ciobanu@hw.ac.uk}
\address{Department of Mathematics, Alan Turing Building, The University
of Manchester, Manchester M13 9PL, UK}
\email{alex.levine@manchester.ac.uk}
\keywords{equations in groups, EDT0L languages}
\subjclass[2020]{03D05, 20F10, 20F65, 68Q45}
\begin{document}

\begin{abstract}
  The survey provides an overview of the work done in the last 10 years to characterise solutions to equations in groups in terms of formal languages. We begin with the work of
  Ciobanu, Diekert and Elder, who showed that solutions to systems of equations in
  free groups in terms of reduced words are expressible as EDT0L languages. We provide a sketch of
  their algorithm, and describe how the free group results extend to hyperbolic groups. The characterisation of solutions as EDT0L languages is very robust, and many group constructions preserve this, as shown by Levine.

  The most recent progress in the area has been made for groups without negative curvature, such as virtually abelian, the integral Heisenberg group, or the soluble Baumslag-Solitar groups, where the approaches to describing the solutions are different from the negative curvature groups. In virtually abelian groups the solutions sets are in fact rational, and one can obtain them as $m$-regular sets. In the Heisenberg group producing the solutions to a single equation reduces to understanding the solutions to quadratic Diophantine equations and uses number theoretic techniques. In the Baumslag-Solitar groups the methods are combinatorial, and focus on the interplay of normal forms to solve particular classes of equations.

  In conclusion, EDT0L languages give an effective and simple combinatorial characterisation of sets of seemingly high complexity in many important classes of groups.
\end{abstract}

\maketitle

\section{Introduction}

The aim of this survey is to show how formal languages can be used to describe
solutions to equations in groups. Equations are a generalisation of two of
Dehn's decision problems for groups: the word problem and the conjugacy problem.
An \textit{equation} in a group \(G\) is an identity \(w = 1\), where \(w\) is a
word over \(G\) together with a finite set of \textit{variables} and their
inverses. A \textit{solution} to an equation is an assignment of an element of
\(G\) to each variable, such that plugging this into \(w\) yields a word
equivalent to \(1\). Thus the conjugacy problem in a group \(G\) can be thought
of as solving the equation \(X^{-1} g X h^{-1} = 1\) for all \(g, \ h \in G\). Since the word and conjugacy problems are undecidable in general, solving equations in arbitrary groups will also be undecidable, and so an important question is for which classes of groups solving equations can be successful.

Finding algorithms to decide whether or not
equations in a variety of different classes of groups and monoids admit solutions has been an active area of research since the 1950s. The first
major positive results are due to Makanin, in the 1970s, when he
proved that it is decidable whether a finite system of equations in a free group or free monoid
is satisfiable \cites{Makanin_systems, Makanin_semigroups, Makanin_eqns_free_group}. Since then, Makanin's work has been extended to
show the decidability of the satisfiability of equations in hyperbolic
\cites{rips_sela, dahmani_guirardel} and certain relatively hyperbolic groups \cite{dahmani}, solvable Baumslag-Solitar groups
\cite{diophantine_metabelian_grps}, right-angled Artin groups \cite{DiekertMuscholl},
and more.

Whilst Makanin's work can determine if an equation admits a solution, it does
not describe the set of solutions. Razborov later created a method that allows
one to construct the solutions to systems of equations in a free group
\cites{Razborov_thesis, Razborov_english}. Since sets of solutions are
often infinite, there are multiple ways they can be represented, if at all. One
method in which solutions can be described is by expressing the set of solutions
as a language, and then defining a grammar or a machine generating the language. A formal language is any set of words over a finite alphabet, and one of the standard ways to categorise languages is the Chomsky hierarchy, which classifies them in terms of complexity into regular, context-free, context-sensitive and recursively enumerable. Since context-free languages are not able to describe these sets (see Section \ref{sec:sol_lang}) and context-sensitve are too general to capture the structure of the solutions in an optimal way, a different class is needed; this class is the focus of the survey.

\textbf{EDT0L languages and solutions.} In 2016, the first author, Diekert and Elder successfully employed formal languages to describe
the set of solutions to systems of equations in free groups
\cite{eqns_free_grps}. The class of languages used was the class of EDT0L
languages. Diekert and Elder generalised this to virtually free groups
\cite{DEijac}, and Diekert, Je\.{z} and Kufleitner extended it to right-angled
Artin groups \cite{EDT0L_RAAGs}. Hyperbolic groups \cites{eqns_hyp_grps, eqns_hyp_grps_conf}, virtually abelian groups \cite{VAEP} and virtually direct
products of hyperbolic groups \cite{EDT0L_extensions} followed later.

In the 1960s, Lindemayer introduced a collection of classes of languages called
\textit{L-systems}, which were originally used for the study of growth of
organisms. EDT0L (\textbf{E}xtended \textbf{D}eterministic
\textbf{T}able \textbf{0}-interaction \textbf{L}indenmayer) languages are one of the L-systems, and were introduced by
Rozenberg in 1973 \cite{ET0L_EDT0L_def_article}. L-systems, including EDT0L
languages, were studied intensively in computer science in the 1970s
and early 1980s; the most studied families of L-systems are subclasses of indexed languages, which although not historically part of the Chomsky hierarchy, fit nicely between context-free and context-sensitive (see Figure \ref{L_system_containments_intro_fig}). A description of solution sets as EDT0L languages
was known before only for quadratic word equations over a free monoid by \cite{FerteMarinSenizergues14}. Since the first author, Diekert and Elder's use of EDT0L languages to
study equations in free groups, a number of other works have used EDT0L
languages (or a similar class called ET0L languages)
\cites{bishop_elder_journal, EDT0L_permuations, appl_L_systems_GT} to describe sets in groups other than solutions to equations. ET0L
languages generalise both EDT0L and context-free languages. We omit
the definition here, as they are never used directly, although we
will refer to them
(see \cite{math_theory_L_systems} for the definition).

When writing the solutions as EDT0L languages, an intrinsic part is to set up the convention on how to write these, as well as establish a way of expressing group elements in terms of words. Suppose $G$ is generated by a finite set $X$ and a normal form $\eta$, that is, a uniform way of expressing elements as words over $X$, is fixed (see Definition \ref{def_NF}).
Our overarching convention (with more flexibility in the case of virtually abelian groups) is that for an equation on $n$ variables in $G$, any solution $(g_1, \dots, g_n)$, with $g_i \in G$, will be converted into a tuple of normal forms $(g_1\eta, \dots, g_n\eta)$, and then into a single word $(g_1 \eta) \# \cdots \# (g_n \eta)$ over the alphabet $X \cup \{\# \}$, where $\#$ is a symbol not in $X$. Writing the solutions in terms of a natural normal form, and not as arbitrary words representing group elements, is one of aspects that makes the algorithms and results about equations challenging. We collect in Table \ref{table:results_free_hyp} the first results obtained in this area, in loosely chronological order.

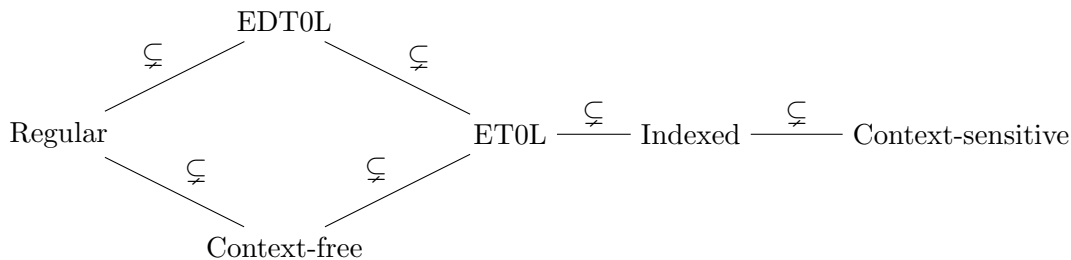
\begin{figure}
  \begin{tikzpicture}
    [scale=.6, auto=left]

    \node (reg) at (0, 0) {Regular};
    \node (EDT0L) at (5, 2.5) {EDT0L};
    \node (CF) at (5, -2.5) {Context-free};
    \node (ET0L) at (10, 0) {ET0L};
    \node (IND) at (14, 0) {Indexed};
     \node (CS) at (20, 0) {Context-sensitive};

    \draw (reg) edge["\(\subsetneq\)"] (EDT0L);
    \draw (reg) edge["\(\subsetneq\)"] (CF);
    \draw (EDT0L) edge["\(\subsetneq\)"] (ET0L);
    \draw (CF) edge["\(\subsetneq\)"] (ET0L);
    \draw (ET0L) edge["\(\subsetneq\)"] (IND);
   \draw (IND) edge["\(\subsetneq\)"] (CS);
  \end{tikzpicture}
    \caption{Reading left to right gives the containments of the classes of
  languages}
  \label{L_system_containments_intro_fig}
\end{figure}




Many of the results have considered the space complexity in which the EDT0L
systems for solutions to equations can be constructed, where the input is the
size of the systems of equations, and in every case considered, this has been
shown to be in non-deterministic polynomial space. Note that an algorithm
that has non-deterministic \(\mathcal O(n)\) space complexity will have
non-deterministic \(\mathcal O(\exp(n))\) time complexity. We summarise the
results including space complexity (where \(n\) is the input length, which is
the total length of the system of equations) in
Tables~\ref{table:results_free_hyp},
\ref{table:results_constructions}, \ref{table:results_other} and
\ref{table:results_monoids}. For the constructions in
Table~\ref{table:results_constructions}, \(f\) and \(g\) refer to the functions
that are the non-deterministic space complexity of the groups used to build
them.
\begin{table}[ht]
\begin{center}

\resizebox{\columnwidth}{!}{%
\begin{tabular}{c|c|c|c|c|c}
Class of groups &generating set &solutions as & language  & $\NSPACE$ &\\
  \hline
\rowcolor{lightgray}
  Free & free basis & freely red words &  EDT0L & $n\log n$ &\cite{eqns_free_grps}\\
  Free &any & unique quasigeods  & EDT0L & $n\log n$ & \cite{eqns_hyp_grps}\\
\rowcolor{lightgray}
Free &any &  quasigeods  & ET0L & $n\log n$& \cite{eqns_hyp_grps}\\
Free &free basis & all words   & ET0L & $n\log n$& \cite{eqns_hyp_grps}\\
\rowcolor{lightgray}
Virtually free & certain & certain quasigeods  & EDT0L & $n^2\log n$ & \cite{DEijac}\\
Virtually free &any & unique quasigeods  & EDT0L & $n^2\log n$& \cite{eqns_hyp_grps}\\
\rowcolor{lightgray}
Virtually free &any & quasigeods & ET0L & $n^2\log n$& \cite{eqns_hyp_grps}\\
Torsion-free hyp &any & unique quasigeods & EDT0L & $n^2\log n$ &  \cite{eqns_hyp_grps}\\
\rowcolor{lightgray}
Torsion-free hyp &any & quasigeods  & ET0L & $n^2\log n$ &  \cite{eqns_hyp_grps}\\
 Hyp with torsion&any & unique quasigeods  & EDT0L & $n^4\log n$ &  \cite{eqns_hyp_grps} \\
\rowcolor{lightgray}
 Hyp with torsion &any & quasigeods  & ET0L & $n^4\log n$&  \cite{eqns_hyp_grps} \\

\end{tabular}%
}
\end{center}
\vspace{1cm}
\caption{Summary of results for free and hyperbolic groups}
\label{table:results_free_hyp}
\end{table}

\textbf{Free and hyperbolic groups.} The results in Table \ref{table:results_free_hyp} all refer to hyperbolic
groups, and these are indeed the groups where decidability of solving equations
was first established, by Makanin for free groups
\cites{Makanin_systems, Makanin_semigroups, Makanin_eqns_free_group}, then by Rips and Sela for
torsion-free hyperbolic \cite{rips_sela}, and by Dahmani and Guirardel \cite{dahmani_guirardel} for all hyperbolic groups.
The reason why hyperbolic groups have been tackled before other classes is,
roughly and intuitively, that the negative curvature helps set bounds on the
number and sizes of paths relating to thin triangles in hyperbolic Cayley
graphs: geodesic triangles are essential when solving systems of triangular
equations, and any equation in a hyperbolic group is handled by transforming it
into a system of triangular ones. Furthermore, hyperbolic groups are
particularly suitable for expressing the solutions in terms of normal forms, as
the most natural normal languages (all geodesics, quasi-geodesics, shortlex
normal forms, etc) are regular for any generating set. The geometry of
hyperbolic Cayley graphs, as well as the regularity of normal forms, were
exploited by the first author and Elder to a large degree in their work.

\textbf{Other classes of groups.} Outside the world of negative curvature, the main classes of groups considered have
been (virtually) abelian and nilpotent. Whilst a lot can be done for virtually
abelian groups in terms of both complexity of algorithms and formal languages
(see Section \ref{sec:virtab}), the results for nilpotent groups are very
limited because undecidability of solving systems of equations occurs already
for all non-abelian free nilpotent groups \cite{DuchinLiangShapiro}. However,
considering single one-variable equations in nilpotent groups such as the Heisenberg
group turned out to be fruitful, and led to some beautiful connections to
Diophantine equations of degree $2$ (see Section \ref{sec:Heis}). These results
are collected in Table~\ref{table:results_other}.
Even though the satisfiability of systems of equations in all free nilpotent groups is undecidable, for single equations there are some positive results.
Duchin, Liang and
Shapiro \cite{DuchinLiangShapiro} showed that the satisfiability of single
equations in all class \(2\) nilpotent groups with a virtually cyclic
commutator subgroup (including all Heisenberg groups) is decidable. Roman'kov
showed that this is not the case for all class \(2\) nilpotent groups
\cite{Romankov2016}. For groups with a higher nilpotency class, the
undecidability of solving even single equations occurs in free nilpotent groups
of class \(3\) and sufficiently large rank \cite{DuchinLiangShapiro}.

\begin{table}[ht]
\begin{center}

\resizebox{\columnwidth}{!}{%
\begin{tabular}{c|c|c|c|c|c}

Class of groups &gen set &solutions as & language  & $\NSPACE$ &\\
  \hline
  \rowcolor{lightgray}
  Free abelian & free abelian basis & standard normal form &  EDT0L & $n^2$ &
  \cites{more_than_1700, VAEP} \\
  Virtually abelian & standard gen set & transversal normal form & EDT0L & $n^2$ &  \cite{VAEP} \\
  \rowcolor{lightgray}
  Heisenberg group & Mal'cev generators &
Mal'cev normal forms & EDT0L & $n^8 (\log n)^2$ & \cite{EDT0L_heisenberg}\\
  \rowcolor{lightgray}
(single 1-variable) & & & & &\\
BS(1,n) & standard gen set & see Section \ref{sec:BS}& EDT0L &  &
  \cite{DuncanEvettsHoltRees}\\
  (specific equations only) & & & & \\
  \rowcolor{lightgray}
RAAGs & standard gen set & standard normal form & EDT0L & $n \log n$ & \cite{EDT0L_RAAGs} \\
Dihedral Artin & inherited gen set  & inherited normal form & EDT0L & $n^2 \log n$ &  \cite{EDT0L_extensions}\\
& & (quasigeodesic) &&& \\
  \rowcolor{lightgray}
  Virtual direct products & inherited gen set  & inherited normal form  & EDT0L & $n^4 \log n$ & \cite{EDT0L_extensions} \\
  \rowcolor{lightgray}
of hyperbolic groups& & (quasigeodesic) & & &
\end{tabular}%
}
\end{center}
\vspace{1cm}
\caption{Summary of results for other groups}
\label{table:results_other}
\end{table}
%
%

One of the most natural questions about solving equations in groups is whether decidability or the language complexity is preserved by group constructions. In recent work of the second author a number of positive results regarding the solution sets were obtained in this direction, and they are collected in Table \ref{table:results_constructions} and expanded on in Section \ref{sec:closure}.

\begin{table}[ht]
\begin{center}

\resizebox{\columnwidth}{!}{%
\begin{tabular}{c|c|c|c|c|c}

Group construction&gen set &solutions as & language  & $\NSPACE$ &\\
  \hline
  \rowcolor{lightgray}
 Direct products & standard gen set & standard normal form &  EDT0L & \(f + g\)  &\cite{EDT0L_extensions}\\
Finite index subgroups & Schreier generators & Schreier normal form & EDT0L & $f$ & \cite{EDT0L_extensions}  \\
  \rowcolor{lightgray}
Wreath products  & standard gen set &  standard normal form & EDT0L & $f$& \cite{EDT0L_extensions} \\
  \rowcolor{lightgray}
with finite groups & & & & &\\

\end{tabular}%
}
\end{center}
\vspace{1cm}
\caption{Summary of results for group constructions}
\label{table:results_constructions}
\end{table}

Using the constructions in
Table~\ref{table:results_constructions}, the second author
showed that solution languages to systems of equations
in groups that are virtually a direct product of
hyperbolic groups are EDT0L \cite{EDT0L_extensions}. As dihedral
Artin groups are virtually a direct product of free groups,
solutions to equations in these groups can be expressed as
EDT0L languages, although the normal form is non-standard.
Very recent work on the Baumslag-Solitar groups \cite{DuncanEvettsHoltRees} has
shown that certain classes of equations, including multiplication tables, can
be written as EDT0L languages with respect to a natural normal form. The authors
of this work conjecture that not all systems of equations with respect to this
normal form will have EDT0L solution languages, and thus this could be the
first example of a group with decidable Diophantine problem, but non-EDT0L
solution languages with respect to a natural regular normal form. These
results are collected in Table~\ref{table:results_other}.

When showing that solutions to systems of equations in free groups are EDT0L,
the authors first reduced the problem to showing that solutions to systems of
equations in free monoids with involution are EDT0L \cite{eqns_free_grps}. A similar approach was required for
right-angled Artin groups, reducing the problem to trace monoids \cite{EDT0L_RAAGs}. These
results for monoids are outlined in Table~\ref{table:results_monoids}.

\begin{table}[ht]
\begin{center}

\resizebox{\columnwidth}{!}{%
\begin{tabular}{c|c|c|c|c|c}

Class of monoids &gen set &solutions as & language  & $\NSPACE$ &\\
  \hline
  \rowcolor{lightgray}
  Free monoids & standard basis & words &  EDT0L & $n\log n$ &
  \cite{eqns_free_grps}\\
  Trace monoids& standard gen set & standard normal form & EDT0L & $n \log n$ & \cite{EDT0L_RAAGs} \\
\end{tabular}%
}
\end{center}
\vspace{1cm}
\caption{Summary of results for monoids}
\label{table:results_monoids}
\end{table}

\textbf{Constraints.} The addition of constraints, that is, requiring the solutions to belong to specific sets, was also considered when showing solution
languages were EDT0L. Adding rational constraints allows one to restrict
specific variables to lie in certain sets, such as finitely generated
subgroups or conjugacy classes. Recognisable constraints are more
restrictive, but can be used to enforce that solutions lie in cosets
of finite index subgroups. In any class of groups where solution
languages to systems of equations are EDT0L, the same can be said for
systems of equations with recognisable constraints \cite{EDT0L_extensions}.
Rational constraints pose more challenges, however.

Systems of equations with rational constraints were shown to have EDT0L
solution langauges in free \cite{eqns_free_grps} and virtually free groups
\cite{DEijac}. For hyperbolic groups, the addition of a slightly more
restrictive class of constraints called quasi-isometrically embedded rational
constraints were shown to preserve the property of having EDT0L solutions when
added to systems of equations \cite{eqns_hyp_grps}. Rational constraints can
also be added to virtually abelian groups \cite{EDT0L_extensions}. For other
classes of groups, this may be difficult. For example, in the Heisenberg group
it remains open as to whether there is an algorithm to check if a given element lies
in a given rational set.

\tableofcontents

\section{Preliminaries}

  \begin{ntn}
    We will write functions to the right of their arguments, as \(xf\)
    or \((x)f\), rather than \(f(x)\). This means that when reading a
    long string of composed functions, we read left to right.
  \end{ntn}

  \subsection{Group equations}

  We define here a system of equations within a group.

  \begin{dfn}
		Let \(G\) be a group, and \(\mathcal X\) be a finite set of variables. A
		\textit{finite system of equations} in \(G\) with \textit{variables}
		\(\mathcal X\) is a finite subset \(\mathcal E\) of \(G \ast
		F(\mathcal{X})\), where \(F(\mathcal{X})\) is the free group on a finite set
		\(\mathcal{X}\). If \(\mathcal E = \{w_1, \ \ldots, \ w_n\}\), we view
		\(\mathcal E\) as a system by writing \(w_1 = w_2 = \cdots = w_n = 1\). A
		\textit{solution} to a system \(w_1 = \cdots = w_n = 1\) is a homomorphism
		\(\phi \colon F({\mathcal X}) \to G\), and such that \(w_1\bar{\phi} =
		\cdots = w_n \bar{\phi} = 1_G\), where \(\bar{\phi}\) is the extension of
		\(\phi\) to a homomorphism from \(G \ast F(\mathcal{X}) \to G\), defined by
		\(g \bar{\phi} = g\) for all \(g \in G\).

  \end{dfn}

  \begin{rmk}
    A solution to an equation with variables \(X_1, \ \ldots, \ X_n\) will
    usually be represented as a tuple \((x_1, \ \ldots, \ x_n)\) of group
    elements, rather than a homomorphism. We can obtain the homomorphism from
    the tuple by defining \(X_i \mapsto x_i\) for each \(i\).
  \end{rmk}

  We give some introductory examples of equations
  in groups.

  \begin{ex}
    Equations in \(\mathbb{Z}\) are linear equations in integers, and elementary
    linear algebra can be used to determine satisfiability, and also describe
    solutions.

    For example, if we write \(a\) as the free generator for \(\mathbb{Z}\),
    then
    \begin{equation}
      \label{Z_ex_eqn}
      a^3 X a^{-5} Y^{-1} a^{7} Y a^{-1} X = 1
    \end{equation}
    is an equation in \(\mathbb{Z}\). We can rewrite \eqref{Z_ex_eqn} using
    additive notation (and writing \(3\) rather
    than \(a^3\)) as
    \begin{equation}
      \label{Z_ex_eqn_2}
      3 + X - 5 - Y + 7 + Y - 1 + X = 0.
    \end{equation}
    Using the fact that \(\mathbb{Z}\) is abelian, we can manipulate
    \eqref{Z_ex_eqn_2} to obtain the following equation with the same set of
    solutions:
    \[
      2X + 4 = 0.
    \]
    Thus our set of solutions (when written as tuples) will be
    \[
      \{(2, \ y) \mid y \in \mathbb{Z}\}.
    \]
  \end{ex}

  \begin{ex}
    Checking if an element is a commutator can be done using an equation. In a
    group \(G\), the equation \(XYX^{-1}Y^{-1} = g\), where \(X\) and \(Y\) are
    variables and \(g \in G\), admits a solution if and only if \(g\) can be
    written as a commutator. This is an important \emph{quadratic} equation,
    and in free groups one can fully enumerate the solutions using Nielsen
    transformations (see for example \cite{GrigorchukKurchanov}).
  \end{ex}




  \subsection{Formal languages}
    We briefly introduce the concept of a language.

    \begin{dfn}
      An \textit{alphabet} is a finite set. A \textit{word}
      over an alphabet \(\Sigma\) is a finite sequence of
      elements of \(\Sigma\), usually denoted \(a_1 \cdots
      a_n\) rather than \((a_1, \ \ldots, \ a_n)\). A
      \textit{language} over \(\Sigma\) is any set of words
      over \(\Sigma\).
    \end{dfn}

    \begin{ex} \
    \begin{enumerate}
        \item

      The English language, that is, the set of words found in an English dictionary, is a formal language over the
      alphabet \(\{a, \ \ldots,  z\}\).

      \item The set of \textit{freely reduced} words
      over the alphabet \(\{a, \ b, \ a^{-1}, \ b^{-1}\}\) (that is, the set of words that
      contain no subword of the form \(aa^{-1}\),
      \(a^{-1}a\), \(bb^{-1}\) or \(b^{-1}b\))
      is also a language.
      \end{enumerate}
    \end{ex}
    \begin{ntn} \ \label{not:IC}
    \begin{enumerate}
      \item We use \(\varepsilon\) to denote the empty word over
      any alphabet;
      \item The set of all words over an alphabet \(\Sigma\)
      is denoted \(\Sigma^\ast\);
      \item If \(\Sigma\) is a finite subset of a group, we
      will sometimes use \(\Sigma^\pm = \Sigma \cup \{a^{-1} \mid a
      \in \Sigma\}\), especially when
      \(\Sigma\) is a finite generating set for a group, and
      then every group element can be represented by
      a word in \(\Sigma^\pm\). However, we will occasionally also state that a generating set is `inverse-closed' instead and not use $\pm$, to simplify the notation.
    \end{enumerate}
    \end{ntn}

  \subsection{Regular languages}
    We need the definition of regular languages throughout, including in
    the definition of EDT0L languages, and for types of constraint that can
    be added to systems of equations. We start with the definition.

    \begin{dfn}
      A \textit{finite-state automaton (FSA)} is a \(4\)-tuple \(\mathcal{A} = (\Sigma, \
      \mathcal{G}, \ q_0, \ F)\), where
      \begin{enumerate}
          \item \(\Sigma\) is a (finite) alphabet;
          \item \(\mathcal{G}\) is a finite directed graph, with edges
          labelled using \(\Sigma \cup \{\varepsilon\}\);
          \item \(q_0 \in V(\mathcal{G})\) is called the \textit{start
          state};
          \item \(F \subseteq V(\mathcal{G})\) is called the set of
          \textit{accept states}.
      \end{enumerate}
      A word \(w \in \Sigma^\ast\) is \textit{accepted} by \(\mathcal{A}\) if
      there is a path in \(\mathcal{G}\) from \(q_0\) to a point in \(F\),
      such that \(w\) is obtained by reading the labels of the edges when tracing the path. The \textit{language accepted} by \(\mathcal{A}\) is the
      set of all words accepted by \(\mathcal{A}\).

      A language \(L \subseteq \Sigma^\ast\) is called \textit{regular} if
      it accepted by some finite-state automaton.
    \end{dfn}


    \begin{ex} \label{ex:reglang}\
    \begin{enumerate}
      \item The language \(\{a, \ b\}^\ast\) is regular. It is accepted
      by the automaton in Figure~\ref{fig:Last_reg}.

    \begin{figure}[H]
      \begin{center}
      \begin{tikzpicture}
        [scale=.6, auto=left,every node/.style={circle}]
        \tikzset{
        on each segment/.style={
          decorate,
          decoration={
            show path construction,
            moveto code={},
            lineto code={
              \path [#1]
              (\tikzinputsegmentfirst) -- (\tikzinputsegmentlast);
            },
            curveto code={
              \path [#1] (\tikzinputsegmentfirst)
              .. controls
              (\tikzinputsegmentsupporta) and (\tikzinputsegmentsupportb)
              ..
              (\tikzinputsegmentlast);
            },
            closepath code={
              \path [#1]
              (\tikzinputsegmentfirst) -- (\tikzinputsegmentlast);
            },
          },
        },
        mid arrow/.style={postaction={decorate,decoration={
              markings,
              mark=at position .5 with {\arrow[#1]{stealth}}
            }}},
      }

        \node[draw, double] (q0) at (0, 0) {\(q_0\)};

        \draw[postaction={on each segment={mid arrow}}]
        (q0) to [out=40, in=-40, distance=2cm] (q0);

        \draw[postaction={on each segment={mid arrow}}] (q0) to
        [out=-140, in=140, distance=2cm] (q0);

        \node (la) at (-2.1, 0) {\(a\)};
        \node (lb) at (2, 0) {\(b\)};
      \end{tikzpicture}
      \end{center}
      \caption{Finite-state automaton for \(\{a, \ b\}^\ast\)
      with start state \(q_0\) and accept state \(q_0\)}
      \label{fig:Last_reg}
    \end{figure}
    \item The language \(\{w \in \{a, \ b,  \ a^{-1}, \
    b^{-1}\}^\ast \mid w
    \text{ is freely reduced}\}\) is regular. It is accepted
    by the finite-state automaton in Figure~\ref{fig:free_red}
        \begin{figure}[H]
      \begin{center}
      \begin{tikzpicture}
        [scale=1.1, auto=left,every node/.style={circle}]
        \tikzset{
        on each segment/.style={
          decorate,
          decoration={
            show path construction,
            moveto code={},
            lineto code={
              \path [#1]
              (\tikzinputsegmentfirst) -- (\tikzinputsegmentlast);
            },
            curveto code={
              \path [#1] (\tikzinputsegmentfirst)
              .. controls
              (\tikzinputsegmentsupporta) and (\tikzinputsegmentsupportb)
              ..
              (\tikzinputsegmentlast);
            },
            closepath code={
              \path [#1]
              (\tikzinputsegmentfirst) -- (\tikzinputsegmentlast);
            },
          },
        },
        mid arrow/.style={postaction={decorate,decoration={
              markings,
              mark=at position .5 with {\arrow[#1]{stealth}}
            }}},
      }

        \node[draw, double] (q0) at (0, 0) {\(q_0\)};
        \node[draw, double] (qa) at (2, 2) {\(q_a\)};
        \node[draw, double] (qA) at (-2, -2) {\(q_{a^{-1}}\)};
        \node[draw, double] (qb) at (2, -2) {\(q_b\)};
        \node[draw, double] (qB) at (-2, 2) {\(q_{b^{-1}}\)};

        \draw[postaction={on each segment={mid arrow}}]
        (q0) to node[midway, below]{\(a\)} (qa);
        \draw[postaction={on each segment={mid arrow}}]
        (q0) to node[midway, above]{\(a^{-1}\)} (qA);
        \draw[postaction={on each segment={mid arrow}}]
        (q0) to node[midway, above]{\(b\)} (qb);
        \draw[postaction={on each segment={mid arrow}}]
        (q0) to node[midway, below]{\(b^{-1}\)} (qB);

        \draw[postaction={on each segment={mid arrow}}]
        (qa) to [out=40, in=-40, distance=2cm]
        node[midway, right]{\(a\)} (qa);

        \draw[postaction={on each segment={mid arrow}}] (qA) to
        [out=-140, in=140, distance=2cm]
        node[midway, left]{\(a^{-1}\)} (qA);

        \draw[postaction={on each segment={mid arrow}}]
        (qb) to [out=40, in=-40, distance=2cm]
        node[midway, right]{\(b\)} (qb);

        \draw[postaction={on each segment={mid arrow}}] (qB) to
        [out=-140, in=140, distance=2cm]
        node[midway, left]{\(b^{-1}\)} (qB);

        \draw[postaction={on each segment={mid arrow}}]
        (qa) to [out=-60, in=60, distance=1cm]
        node[midway, right]{\(b\)} (qb);
        \draw[postaction={on each segment={mid arrow}}]
        (qb) to [out=120, in=-120, distance=1cm]
        node[midway, right]{\(a\)} (qa);

        \draw[postaction={on each segment={mid arrow}}]
        (qB) to [out=-65, in=65, distance=1cm]
        node[midway, left]{\(a^{-1}\)} (qA);
        \draw[postaction={on each segment={mid arrow}}]
        (qA) to [out=115, in=-115, distance=1cm]
        node[midway, left]{\(b^{-1}\)} (qB);

        \draw[postaction={on each segment={mid arrow}}]
        (qa) to [out=-150, in=-30, distance=1cm]
        node[midway, above]{\(b^{-1}\)} (qB);
        \draw[postaction={on each segment={mid arrow}}]
        (qB) to [out=30, in=150, distance=1cm]
        node[midway, above]{\(a\)} (qa);

        \draw[postaction={on each segment={mid arrow}}]
        (qb) to [out=-150, in=-30, distance=1cm]
        node[midway, below]{\(a^{-1}\)} (qA);
        \draw[postaction={on each segment={mid arrow}}]
        (qA) to [out=30, in=150, distance=1cm]
        node[midway, below]{\(b\)} (qb);
      \end{tikzpicture}
      \end{center}
      \caption{Finite-state automaton for \(\{w \in \{a, \ b,  \ a^{-1}, \
    b^{-1}\}^\ast \mid w
    \text{ is freely reduced}\}\) where \(q_0\) is the start
    state and every state is an accept state}
      \label{fig:free_red}
    \end{figure}
    \item The language \(\{a^n \# a^n \mid n \in \mathbb{Z}_{\geq 0}\}\) is not regular. Showing this
    requires the Pumping Lemma (see for example \cite{groups_langs_aut}).
    \end{enumerate}
    \end{ex}

 \subsection{Solution languages} \label{sec:sol_lang}
  We now explain how we represent solution sets as languages. We start by
  defining a normal form for elements in a group.

  \begin{dfn}\label{def_NF}
    Let \(G\) be a group, and \(\Sigma\) be a finite generating set for \(G\). A
    \textit{normal form} for \(G\), with respect to \(\Sigma\), is a function
    \(\eta \colon G \to (\Sigma^\pm)^\ast\) that fixes \(\Sigma^\pm\), and such
    that the word \(g \eta\) represents the element \(g\) for any \(g \in G\).

    A normal form \(\eta\) is called
    \begin{enumerate}
      \item \textit{regular} if \(\im \eta\) is a regular language over
      \(\Sigma^\pm\);
      \item \textit{geodesic} if \(\im \eta\) comprises only geodesic words
      in \(G\), with respect to \(\Sigma\); that is for all \(g \in G\),
      \(|g \eta| = |g|_{(G, \Sigma)}\);
      \item $(\lambda, \mu)$-\textit{quasi-geodesic} if there exist \(\lambda, \mu > 0\) such that
      \(|g \eta| \leq \lambda |g|_{(G, \Sigma)} + \mu\) for all \(g \in G\). We write \textit{quasi-geodesic} if the constants involved are not important.
    \end{enumerate}
  \end{dfn}

  Note that since $\eta$ is a function, the definition implies that each element will have a unique
  representative. We will often abuse notation and treat a
  normal form \(\eta\) as a subset \(\im \eta \subseteq (\Sigma^\pm)^\ast\), rather
  than as a function, as this is sometimes more convenient.

We give some standard examples of normal forms.
\begin{ex} \
\begin{enumerate}
    \item The standard normal form for $\mathbb{Z}^2$ on generators $\{a, b\}$ is the set $\{a^nb^m \mid m,n \in \mathbb{Z}\}$ over $\{a, a^{-1}, b, b^{-1}\}$. This normal form is geodesic and regular.
    \item The standard normal form for the free group $F(a,b)$ on generators $\{a, b\}$ is the set of reduced words over $\{a, a^{-1}, b, b^{-1}\}$. This is geodesic and regular, as shown in Example \ref{ex:reglang} (2).
    \item More generally, the set of shortlex representatives of a hyperbolic group $G$ over a generating set endowed with an order is a geodesic and regular normal form (see \cite{groups_langs_aut} for details).
    \item One of the standard normal forms used for elements in the Baumslag-Solitar groups $\BS(1,k)$ is given in Lemma~\ref{lem:BSnormform}. This normal form is regular but not geodesic.
\end{enumerate}
\end{ex}

  We can now express solutions as languages, with respect to
  a given normal form.

  \begin{dfn}
    Let \(G\) be a group with generating set \(\Sigma\),
    and let \(\eta \colon G \to (\Sigma^\pm)^\ast\) be a normal form for \(G\)
    with respect to \(\Sigma\). Let \(\mathcal{E}\) be a system of equations in
    \(G\) with a set \(\mathcal{S}\) of solutions. The \textit{solution
    language} to \(\mathcal{E}\) is the language
    \[
      \{(g_1 \eta) \# \cdots \# (g_n \eta) \mid (g_1, \ \ldots, \ g_n) \in
      \mathcal{S}\}
    \]
    over \(\Sigma^\pm \sqcup \{\#\}\).
  \end{dfn}

\begin{rmk}
Note that context-free languages do not in general work for describing solutions to equations when using the above convention. Even in
\(\mathbb{Z}\), the system $\{X = Y, Y = Z\}$ has the solution
language \(\{a^x \# a^x \# a^x \mid x \in \mathbb{Z}\}\) with respect to the
standard normal form, and this is not a context-free language.

\end{rmk}

\subsection{Space complexity}
  We briefly define space complexity. A more comprehensive introduction to complexity, including the standard big O notation $\Oh$, can be
  found in \cite{computational_compl}.

  \begin{dfn}
    Let \(f \colon \mathbb{Z}_{\geq 0} \to \mathbb{Z}_{\geq 0}\) be a function.
    We say that an algorithm runs in \(\mathsf{NSPACE}(f)\) if it can be
    performed by a non-deterministic Turing machine that satisfies the
    following:
    \begin{enumerate}
      \item A read-only input tape;
      \item A write-only output tape;
      \item A read-write work tape such that a computation path in the Turing
        machine with input length \(n\) uses at most \(\mathcal{O}(nf)\) units
        of the work tape.
    \end{enumerate}

    An algorithm is said to run in \textit{non-deterministic linear (resp.
    quadratic, resp. polynomial) space} if it runs in \(\mathsf{NSPACE}(f)\),
    for some linear (resp. quadratic, resp. polynomial) function \(f \colon
    \mathbb{Z}_{\geq 0} \to \mathbb{Z}_{\geq 0}\). 
  \end{dfn}


  \begin{rmk}
    \label{constructible_rmk}
    We will often say that grammars or automata that define languages are
    \textit{constructible} in \(\ns(f)\). This means that there is an algorithm
    that runs in \(\ns(f)\), which takes an input that will be specified (which
    is sometimes other grammars or automata), and outputs the desired grammar or
    automaton.
  \end{rmk}

\subsection{Equations with constraints}

A \textit{finite system of equations with rational (recognisable)
		constraints} \(\mathcal{E}\) in a group \(G\) is a finite system of
		equations \(\mathcal{F}\) with variables \(X_1, \ \ldots, \ X_n\),
		together with a tuple of rational (recognisable) subsets \(R_1, \ \ldots, \
		R_n\) of \(G\). A \textit{solution} to \(\mathcal{E}\) is a solution
		\(\phi\) to \(\mathcal{F}\), such that \(X_i \phi \in R_i\) for all \(i\).

 We cover here the basic definitions of rational and recognisable subsets of
  monoids. Both types are used as constraints for variables in equations in
  groups, and we will use recognisable constraints to show that the class of
  groups where solutions to systems of equations form EDT0L languages is closed
  under passing to finite index subgroups. Whilst our main focus will be
  rational subsets of groups, rational subsets of monoids are
  required in the definition of an EDT0L language.

  %

  \begin{dfn}
    Let \(S\) be a monoid (for example, a group), and \(\Sigma\) be a (monoid)
    generating set for \(S\). Define \(\pi \colon \Sigma^\ast \to S\) to be the
    natural homomorphism. We say a subset \(A \subseteq S\) is
    \begin{enumerate}
      \item \textit{recognisable} if \(A \pi^{-1}\) is a regular language over
      \(\Sigma\);
      \item \textit{rational} if there is a regular language \(L\) over
      \(\Sigma\), such that \(A = L \pi\).
    \end{enumerate}
    Note that this definition is independent of the choice of generating set.
  \end{dfn}

  \begin{rmk}
    Recognisable sets are rational.
  \end{rmk}


  \begin{ex}
    Finite subsets of any monoid are rational. Finite subsets of a group \(G\)
    are recognisable if and only if \(G\) is finite \cite{herbst_thomas}. Finite
    index subgroups of any group are recognisable, and hence rational.
  \end{ex}



  Herbst and Thomas proved that recognisable sets in a group \(G\) are always
  finite unions of cosets of a finite index normal subgroup of \(G\)
  \cite{herbst_thomas}. This is used to prove many facts about recognisable
  sets.


  \section{EDT0L languages}

  The original definition is due to Rozenberg
  \cite{ET0L_EDT0L_def_article}, however, the use of the rational control, which
  often makes working with EDT0L languages much easier, is due to Asveld
  \cite{Asveld1977}.

  \begin{dfn}
    An \textit{EDT0L system} is a tuple \(\mathcal H = (\Sigma, \ C, \ \omega, \
    \mathcal{R})\), where
    \begin{enumerate}
      \item \(\Sigma\) is an alphabet, called the \textit{(terminal) alphabet};
      \item \(C\) is a finite superset of \(\Sigma\), called the
      \textit{extended alphabet} of \(\mathcal H\);
      \item \(\omega \in C^\ast\) is called the \textit{start (or seed) word};
      \item \(\mathcal{R}\) is a rational subset of
      \(\End(C^\ast)\), called the \textit{rational control} of \(\mathcal H\).
    \end{enumerate}
    The language \textit{accepted} by \(\mathcal H\) is
    \[
      L(\mathcal H) = \{\omega \phi \mid \phi \in \mathcal{R}\} \cap
      \Sigma^\ast.
    \]
    A language accepted by an EDT0L system is called an \textit{EDT0L
    language}.

  \end{dfn}

  \begin{ntn}
    When defining endomorphisms of \(C^\ast\) for some extended alphabet \(C\),
    within the definition of an EDT0L system, we will usually define each
    endomorphism by where it maps each letter in \(C\). If any letter is not
    assigned an image within the definition of an endomorphism, we will say that
    it is fixed by that endomorphism.
  \end{ntn}

  We give some examples and non-examples of EDT0L languages.

  \begin{ex} \
  \begin{enumerate}
  \item The language \(L = \{a^{n^2} \mid n \in \mathbb{Z}_{> 0}\}\) is an EDT0L
    language over the alphabet \(\{a\}\). This can be seen by considering \(C =
    \{\perp, \ s, \ t, \ u, \ a\}\) as the extended alphabet of an EDT0L system
    accepting \(L\), with \(\perp\) as the start word, and using the finite
    state automaton from Figure \ref{n2_EDT0L_fig} to define the rational
    control.
    \begin{figure}[H]
      \begin{center}
      \begin{tikzpicture}
        [scale=.6, auto=left,every node/.style={circle}]
        \tikzset{
        on each segment/.style={
          decorate,
          decoration={
            show path construction,
            moveto code={},
            lineto code={
              \path [#1]
              (\tikzinputsegmentfirst) -- (\tikzinputsegmentlast);
            },
            curveto code={
              \path [#1] (\tikzinputsegmentfirst)
              .. controls
              (\tikzinputsegmentsupporta) and (\tikzinputsegmentsupportb)
              ..
              (\tikzinputsegmentlast);
            },
            closepath code={
              \path [#1]
              (\tikzinputsegmentfirst) -- (\tikzinputsegmentlast);
            },
          },
        },
        mid arrow/.style={postaction={decorate,decoration={
              markings,
              mark=at position .5 with {\arrow[#1]{stealth}}
            }}},
      }

        \node[draw] (q0) at (0, 0) {\(q_0\)};
        \node[draw] (q1) at (0, -5)  {\(q_1\)};
        \node[draw] (q2) at (5, -5) {\(q_2\)};
        \node[draw, double] (q3) at (0, -10) {\(q_3\)};

        \draw[postaction={on each segment={mid arrow}}] (q0) to (q1);

        \draw[postaction={on each segment={mid arrow}}] (q1) to
        [out=40, in=140, distance=1cm] (q2);

        \draw[postaction={on each segment={mid arrow}}] (q2) to
        [out=-140, in=-40, distance=1cm] (q1);

        \draw[postaction={on each segment={mid arrow}}] (q1) to (q3);

        \node (l1) at (-2.1, -2.35) {\(\varphi_\perp
        \colon \perp \mapsto tsa\)};

        \node (l2) at (2.5, -3.6) {\(\varphi_1 \colon s \mapsto su\)};

        \node (l3a) at (2.5, -6.4) {\(\varphi_2 \colon t \mapsto at\)};

        \node (l3b) at (3.05, -6.9) {\(u \mapsto ua^2\)};

        \node (l4) at (-2.2, -7.35) {\(\varphi_3 \colon
        s, t, u \mapsto \varepsilon\)};

      \end{tikzpicture}
      \end{center}
      \caption{Rational control for \(L = \{a^{n^2} \mid n \in \mathbb{Z}_{>
      0}\}\), with start state \(q_0\) and accept state \(q_3\).}
      \label{n2_EDT0L_fig}
    \end{figure}
    \noindent The rational control can also be written as \(\varphi_\perp
    (\varphi_1 \varphi_2)^\ast \varphi_3\). This language is
    not context-free.

  \

 \item Consider the equation \(XY^{-1} = 1\) in \(\mathbb{Z}\) with the
    presentation \(\langle a \mid \rangle\). The solution language with
    respect to the standard normal form is
    \(
      L = \{a^n \# a^n \mid n \in \mathbb{Z}\},
    \)
    over the alphabet \(\{a, \ a^{-1}, \ \#\}\). The language \(L\) is EDT0L;
    our system will have the extended alphabet \(\{\perp, \ \#, \ a, \
    a^{-1}\}\) and rational control defined by Figure \ref{a^na^n_fig}.
    \begin{figure}[H]
      \begin{center}
      \begin{tikzpicture}
        [scale=.6, auto=left,every node/.style={circle}]
        \tikzset{
        on each segment/.style={
          decorate,
          decoration={
            show path construction,
            moveto code={},
            lineto code={
              \path [#1]
              (\tikzinputsegmentfirst) -- (\tikzinputsegmentlast);
            },
            curveto code={
              \path [#1] (\tikzinputsegmentfirst)
              .. controls
              (\tikzinputsegmentsupporta) and (\tikzinputsegmentsupportb)
              ..
              (\tikzinputsegmentlast);
            },
            closepath code={
              \path [#1]
              (\tikzinputsegmentfirst) -- (\tikzinputsegmentlast);
            },
          },
        },
        mid arrow/.style={postaction={decorate,decoration={
              markings,
              mark=at position .5 with {\arrow[#1]{stealth}}
            }}},
      }

        \node[draw] (q0) at (0, 0) {\(q_0\)};
        \node[draw] (q1) at (2, -4)  {\(q_1\)};
        \node[draw] (q2) at (-2, -4) {\(q_2\)};
        \node[draw, double] (q3) at (0, -8) {\(q_3\)};

        \draw[postaction={on each segment={mid arrow}}] (q0) to (q1);
        \draw[postaction={on each segment={mid arrow}}] (q0) to (q2);

        \draw[postaction={on each segment={mid arrow}}]
        (q1) to [out=40, in=-40, distance=2cm] (q1);

        \draw[postaction={on each segment={mid arrow}}] (q2) to
        [out=-140, in=140, distance=2cm] (q2);

        \draw[postaction={on each segment={mid arrow}}] (q1) to (q3);
        \draw[postaction={on each segment={mid arrow}}] (q2) to (q3);

        \node (l1) at (1.5, -1.8) {\(\id\)};
        \node (l2) at (-1.5, -1.8) {\(\id\)};
        \node (l3) at (5.8, -4) {\(\varphi_+ \colon \perp \mapsto \perp a\)};
        \node (l4) at (-5.9, -4) {\(\varphi_- \colon \perp \mapsto \perp
        a^{-1}\)};
        \node (l5) at (2.7, -6) {\(\phi \colon \perp \mapsto \varepsilon\)};
        \node (l5) at (-2.7, -6) {\(\phi \colon \perp \mapsto \varepsilon\)};

      \end{tikzpicture}
      \end{center}
            \caption{Rational control for \(L = \{a^n \# a^n \mid n \in \mathbb{Z}\}\)
      with start state \(q_0\), and accept state \(q_3\).}
      \label{a^na^n_fig}
    \end{figure}
    \noindent The rational control can also be expressed using the rational
    expression \(\{\varphi_-^\ast, \ \varphi_+^\ast\} \phi\).
This language is not regular, which can be shown using the pumping lemma
    (\cite{groups_langs_aut}, Theorem 2.5.17), but is context-free.

    \

    \item The equation \(XY^{-1} = 1\) in the free group $F(a,b)$ with
    generators $a,b$ has solution language with
    respect to the standard normal form, that is, freely reduced words, equal to
    \[
      L = \{w \# w \mid w \textrm{\ freely reduced over\ } \{a, a^{-1}, b, b^{-1}\}\}.
    \]
   The language \(L\) is EDT0L (similar to Example 1.3 of \cite{eqns_free_grps}), but not context-free. This generalises to any free group of finite rank.

   We construct an EDT0L system for \(L\). Our
   alphabet is \(\Sigma = \{a, \ b, \ a^{-1}, \ b^{-1}, \ \#\}\), our extended alphabet
   will be \(C = \Sigma \cup \{\perp\}\),
   and our start word will be \(\perp \# \perp\). For each
   \(x \in \{a, \ b, \ a^{-1}, \ b^{-1}\}\), define
   \(\varphi_x \in \End(C^\ast)\) by \(\perp \varphi_x =
   x \perp\). Let \(\mathcal{R} \subseteq \End(C^\ast)\) be
   the rational set obtained by replacing every \(x \in
   \{a, \ b, \ a^{-1}, \ b^{-1}\}\) labelling an edge in
   the finite-state automaton within Figure~\ref{fig:free_red}
   with \(\varphi_x\). Doing so we obtain
   \[
     \{(\perp \# \perp) \phi \mid \phi \in \mathcal{R}\}
     = \{w \perp \# w \perp \mid w \text{ freely reduced over }
     \{a, \ b, \ a^{-1}, \ b^{-1}\}\}.
   \]
   By post-concatenating the rational control with the map
   \(\psi \in \End(C^\ast)\) defined by \(\perp \psi =
   \varepsilon\), which can be done by adding a new accept
   state, making it the only accept state and adding a
   \(\psi\)-labelled edge from every former accept state to
   the new one, we obtain a new rational set \(\mathcal{R}'
   \subseteq \End(C^\ast)\) such that
      \[
     L = \{(\perp \# \perp) \phi \mid \phi \in \mathcal{R}'\}.
   \]
   \item The language
   \[
    \{w \in \{a, \ b, \ a^{-1}, \ b^{-1}\} \mid w=_{F(a, \ b)} 1\}
   \]
   is context-free, but not EDT0L \cite{appl_L_systems_GT}.
   \item
   The language
   \[
   \{w \in \{a, \ b\}^\ast \mid |w| = n^2 \text{
   for some } n \in \mathbb{Z}_{\geq 0}\}
   \]
   is not EDT0L (Corollary IV.3.4 of \cite{math_theory_L_systems}). It is not
   difficult to show that it is ET0L, but not context-free.
   \end{enumerate}
  \end{ex}

  \begin{theorem}[\cite{EDT0L_extensions}, Lemma 2.15]
    \label{EDT0L_closure_properties_thm}
    The class of EDT0L languages is closed under the following operations:
    \begin{enumerate}
      \item Finite unions;
      \item Intersection with regular languages;
      \item Concatenation;
      \item Kleene star closure;
      \item Image under free monoid homomorphisms.
    \end{enumerate}
    Applying any of the operations will not affect the space complexity that
    systems for the languages involved can be constructed in, assuming that the
    regular language in (2) and the homomorphism in (5) are
    constructible in constant space.
  \end{theorem}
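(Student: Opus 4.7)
The plan is to work throughout with the rational control formulation: given input EDT0L systems $\mathcal{H}_i=(\Sigma_i, C_i, \omega_i, \mathcal{R}_i)$ with underlying endomorphism-NFAs $\mathcal{N}_i$, every closure operation can be realised by an explicit manipulation of these automata together with at most a bounded amount of fresh data. For (1) finite unions I introduce a fresh seed $\perp$ and branch at the start between two initialisers $\perp\mapsto\omega_i$ leading into the respective $\mathcal{N}_i$, extending every endomorphism to fix letters outside its native alphabet. For (3) concatenation I use seed word $\perp_1\perp_2$ with initialisers $\perp_i\mapsto\omega_i$ and serially compose $\mathcal{N}_1$ followed by $\mathcal{N}_2$. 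For (5) image under a free monoid homomorphism $h\colon\Sigma^*\to\Delta^*$ I change the terminal alphabet to $\Delta$ and post-compose $\mathcal{N}$ with the single endomorphism that applies $h$ on $\Sigma$ and fixes $C\setminus\Sigma$, so that the outputs in $\Delta^*$ are precisely the images under $h$ of the outputs in $\Sigma^*$. Each case uses only standard NFA operations on $\mathcal{N}_i$ and only constantly much new data, so the space complexity of the new system matches that of the inputs.

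For (4) Kleene star my plan is to enlarge the extended alphabet by a frozen copy $\Sigma'$ of the terminal alphabet and a fresh seed $\perp$, then build a loop in the rational control whose body first appends a fresh $\omega$ via $\perp\mapsto\perp\omega$, next traces an accepting path of $\mathcal{N}$ (extended to fix $\Sigma'$ and $\perp$), and finally applies a freeze endomorphism $a\mapsto a'$ on $\Sigma$ that locks in the just-generated terminal letters. A single exit endomorphism removes $\perp$ and unfreezes $\Sigma'$ back to $\Sigma$. The point I expect to have to verify carefully is that any iteration producing a word still containing letters of $C\setminus\Sigma$ eventually causes the whole derivation to be rejected rather than being silently repaired by a later iteration; this is where careful bookkeeping over which endomorphisms can freeze what, and in which order, enters.

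The main obstacle is (2), intersection with a regular language $R$ given by a DFA $\mathcal{A}=(\Sigma, Q, q_0, F, \delta)$, because the DFA state has to be threaded through a derivation whose letters undergo uniform rewrites. My approach is a product construction on the extended alphabet: introduce annotated letters $c_{p,q}$ for $c\in C$ and $p,q\in Q$, intended to mean ``when $c$ is fully expanded it drives $\mathcal{A}$ from $p$ to $q$''. Each original endomorphism $c\mapsto d_1\cdots d_\ell$ lifts to the collection of annotated endomorphisms $c_{p,q}\mapsto d_{1,p,r_1}d_{2,r_1,r_2}\cdots d_{\ell,r_{\ell-1},q}$ over all compatible choices of intermediate states $r_1,\ldots,r_{\ell-1}$, with the non-determinism absorbed into edges of a product automaton on top of $\mathcal{N}$; terminal annotations $a_{p,q}$ are retained only when $\delta(p,a)=q$, the seed becomes $\perp_{q_0,q}$ for a non-deterministically guessed $q\in F$, and a final endomorphism strips the annotations off terminal letters. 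The delicate task is to check that annotations compose consistently across iterated applications, but once done the new rational control grows by only a polynomial factor in $|Q|$, which is constant in the input length by hypothesis, so the space complexity claim follows.
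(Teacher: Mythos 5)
The survey itself does not prove this theorem (it is quoted from Lemma 2.15 of \cite{EDT0L_extensions}), so your proposal has to be judged on its own merits. Your constructions for (1), (2) and (5) are essentially the standard ones and are sound (modulo making the target alphabet in (5) disjoint from \(C \setminus \Sigma\), and annotating a seed word of length greater than one in (2); the state-annotation argument for (2) works because every lifted endomorphism preserves the chaining invariant on annotations). The genuine problems are (3) and (4), and both stem from a single fact that your sketch only partially confronts: the endomorphisms in a rational control need not fix the terminal alphabet, and letters of \(C \setminus \Sigma\) surviving in a sentential form can be rewritten into terminals later. For (3), your serial composition fails outright. Take \(L_1 = \{a\}\) (seed \(a\), control \(\{\id\}\)) and \(L_2 = \{b\}\) (seed \(a\), control the single map \(a \mapsto b\)): your system derives \(\perp_1\perp_2 \to a\perp_2 \to aa \to bb\), so it accepts \(bb\) and never produces \(ab = L_1L_2\), because the second system's endomorphisms also act on the first factor's already-produced terminal letters. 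The cure is exactly your freezing idea from (4): prime (or pass to a disjoint copy of) the first factor's terminals before handing control to \(\mathcal{N}_2\), and unprime at the very end.

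For (4), the point you flag as "to be verified carefully" is not a verification detail but a genuine failure of the construction as stated: leftover nonterminals \emph{are} silently repaired. Concretely, let \(C = \{X, Y, a, b\}\), \(\Sigma = \{a, b\}\), seed \(X\), and control \(\{\alpha, \beta\}\) with \(X\alpha = Y\) and \(X\beta = a\), \(Y\beta = b\); then \(L = \{a\}\). In your loop, iteration one applies \(\perp \mapsto \perp X\) and then \(\alpha\), leaving \(\perp Y\) with nothing to freeze; iteration two gives \(\perp XY\) and then, via \(\beta\), \(\perp ab\); the exit map outputs \(ab \notin L^\ast = a^\ast\). The missing idea is a \emph{kill} step: at the end of each iteration (combined with the freeze map) send every letter of \(C \setminus \Sigma\) other than \(\perp\) to a dead symbol \(\dagger \notin \Sigma\) that is fixed by every endomorphism in the control, so that an iteration which fails to terminate in \(\Sigma^\ast\) poisons the derivation permanently and is discarded by the final intersection with \(\Sigma^\ast\); completeness is unaffected since successful iterations leave no such letters. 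Inserting the same freeze-and-kill map between the two stages of (3) repairs that case as well. With these modifications your constructions become correct, and the complexity claims survive, since only a bounded amount of new alphabet and finitely many new endomorphisms are added.
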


\section{Getting the solutions in free and hyperbolic groups}

In this section we give a sketch of the algorithms producing the EDT0L description in free and in hyperbolic groups. The main algorithm we describe is that for free groups, which is then used to solve equations in torsion-free hyperbolic groups. 
There is a similar, but more involved, reduction from hyperbolic groups with torsion to virtually free groups. Since torsion adds a significant layer of difficulty to the algorithms and arguments involved, in order to keep this survey at an accessible level, we refer the reader to \cite{DEijac} and \cite{eqns_hyp_grps} for details on the torsion case.
In the same vein, we restrict ourselves to equations in groups, and do not include inequations or constraints in our general discussion in order to keep the exposition simpler.

\textbf{Overview of the algorithm.} At the core of the algorithm producing the solutions to an equation in a free group or monoid is the construction of a finite labeled graph: this graph, or automaton, has (i) verticesvertices  labeled by equations of bounded size which are variations of the input equation,
plus some additional data, and (ii) directed edges corresponding to  transformations applied to the equations; the edges are labelled by endomorphisms of a free monoid $C^*$, where $C$ is a finite alphabet which includes the  group or monoid generators. The graph, viewed as a (non-deterministic) finite-state automaton (FSA), produces a rational language of endomorphisms of $C^*$. In \cite{eqns_free_grps} we show that the set of all such endomorphisms applied to a particular `seed' word gives the full set of solutions to the input equation as reduced words. Thus, by the definition of Asveld \cite{Asveld1977}, we obtain that the solution set is an EDT0L language, and therefore an indexed language.
Moreover, one can decide if there are zero, infinitely or finitely many solutions simply by checking if the graph is empty.

We present below the free group algorithm and refer the reader to \cite{eqns_free_grps} for the free monoid case, which is similar (requires different notation, but not a different approach) to the free group case.

\subsection{Sketch of the algorithm for free groups.}\label{sec:freegp}

\medskip

The following is a summary of the main steps used for getting the solutions to equations in free groups. The reader can find the full details and additional explanations in the first author, Diekert and Elder's paper \cite{eqns_free_grps}. In the next subsection we will refer to this algorithm as the CDE algorithm.

Let $F(A)$ be the free group generated by an inverse-closed set $A$ (see Notation \ref{not:IC}), and let $(M(A), \bar{ \ } \ )$ be the free monoid with involution over $A$, defined as follows. An \emph{involution} on the set $A$ is a mapping $x \mapsto \overline{x}$ such that
$\overline{\overline{x}} = x$ for all $x\in A$, which extends to an involution on the free monoid over $A$ by applying $\overline{xy}=\overline{y}\,\overline{x}$. Thus $(M(A), \bar{ \ } \ )$ is simply the free monoid on $A$ with the addition of an involution that plays the role of the inverse in the free group $F(A)$, but where no cancellations are allowed.

\textbf{Preprocessing.}
The algorithm proceeds with transforming the input equation (in the free group) into an equation in a free monoid with involution, as shown in Figure \ref{fig:preprocessing}.
 The first step of the preprocessing is to transform an equation $U=1$ in $F(A)$ over a set of variables $\mathcal{X}=\{X_1, \dots, X_k\}$ into
a \emph{triangular} system $\Delta_{F(A)}$ of equations $U_j=V_j$, $1\leq j \leq s$ for some $s>1$, by introducing new variables; that is, all equations in $\Delta_{F(A)}$ satisfy $|U_j| =2$ and $|V_j| =  1$ for all $j$, and the initial variables $X_1, \dots, X_k$ of $U=1$ are a subset of the set of variables occurring in the system $U_j=V_j$ (see \cite[Section 4]{eqns_free_grps} for details).

In the second step we leave the group setting: we produce a triangular system $\Delta_{M(A)}$ of equations $U'_j=V'_j$, $1\leq j \leq s'$ for some $s'>1$, over the free monoid with involution $(M(A), \bar{ \ } \ )$ from the triangular free group system $\Delta_{F(A)}$. This is done by using the fact that in the free group $F(A)$, a triangular equation such as $XY=Z$ (where $X, Y, Z$ are variables or constants) has a solution in reduced words over $A$ if and only if there exist words $P,Q,R$  with $X=PQ, Y=Q^{-1}R, Z=PR$ where no cancellation occurs between $P$ and $Q$, $Q^{-1}$ and $R$, and $P$ and $R$ (see \cite[Lemma 4.1]{eqns_free_grps}). At this point we introduce the rational constraint that the solutions to the system $\Delta_{M(A)}$ in $M(A)$ are freely reduced words, in the sense that no $a \bar{a}$ or $\bar{a}a$ is allowed.

Finally, the triangular system $\Delta_{M(A)}$ can be fully encoded into a single equation $U'=V'$ over $(M(A), \bar{ \ }\ )$, such that the solutions to the initial $U=1$ in the free group can be obtained from solutions to $U'=V'$ (see \cite[Section 4]{eqns_free_grps}). To do this, we use the marker $\#$ (with $\# \notin A$ and $\overline{\#}=\#$) to define a single word equation
$U'=V'$ with $U',V' \in (A \cup \{\#\})^*$, where
\begin{align}\label{eq:oneweA}
 U'= U'_1\# \cdots \# U'_{s'} &\; \text{ and }
 V'= V'_1\# \cdots \# V'_{s'}.
\end{align}
Adding the constraint that no solution of $U'=V'$ contains $\#$, the solutions of $U'=V'$ agree with the solutions of the system $U'_j=V'_j$ which, together with the constraint that no $a_i \bar{a_i}$ and $\bar{a_i}a_i$, where $a_i \in A$, are allowed, lead to solutions of $U_j=V_j$, and therefore solutions of $U=1$ as reduced words. The set of variables in $U'=V'$ contains those variables appearing in $U=1$, so transferring solutions from $U'=V'$ to $U=1$ refers to the solutions restricted to the appropriate set of variables, that is, those appearing in $U=1$ only.

 From now on we focus on the single word equation $U'=V'$ in $(M(A), \bar{ \ } \ )$.
We add to $U'$ and $V'$ the letters $a_i$ from the generating set $A$ and the variables $X_i$ appearing in $U'=V'$, all separated by $\#$, and write the equation together with this data as a single word $\Winit$: $$\Winit= \#X_1\#\dots \#X_\ell \# U'\# V'\#\overline {U'}\# \overline {V'}\# \overline{X_\ell}\#\dots \#\overline{X_1}\#.$$
An easy calculation shows that although $|\Winit| > |U|$, $\Winit$ is still linear in the size of $U$; the length of $\Winit$ will be taken as the size of the input (see \cite[Section 3.1]{eqns_free_grps} for more details on $\Winit$). The equation $U'=V'$ has solutions if and only if $\Winit=\overline{\Winit}$ has a solution. The change from the equation $U'=V'$ to the word $\Winit$ by adding  variables allows us to keep track of the transformations applied to the equation and the variables concurrently, so we not only obtain an EDT0L language from the seed word $\Winit$, but also automatically keep track of how the variables are changed through the entire algorithm.

\begin{figure}

  \begin{tikzpicture}
    \node[draw, rectangle] (n1) at (0, 8) {\(U = 1\) \ in \(F(A)\)};
    \node[draw, rectangle] (n2) at (0, 6)
      {\(U_1 = V_1, \ U_2 = V_2, \ \ldots, \ U_s = V_s; \ |U_i| + |V_i| = 3\) \ in
      \(F(A)\)};
    \node[draw, rectangle] (n3) at (0, 4)
      {\(U_1' = V_2', \ U_2' = V_2', \ \ldots, \ U_{s'}' = V_{s'}'\)
      \ in \((M(A), \bar{ \ } \ )\)};
    \node[draw, rectangle] (n4) at (0, 2)
      {\(\underbrace{U_1' \# U_2' \# \cdots U_{s'}'}_{U'} =
      \underbrace{V_1' \# V_2' \# \cdots V_{s'}'}_{V'} \)
      in \((M(A), \bar{ \ } \ )\)};
    \node[draw, rectangle] (n5) at (0, 0) {\(W_\text{init}\)
    \  in \((M(A), \bar{ \ } \ )\)};

    \draw[-Stealth] (n1) to (n2);
    \draw[-Stealth] (n2) to (n3);
    \draw[-Stealth] (n3) to (n4);
    \draw[-Stealth] (n4) to (n5);
  \end{tikzpicture}

  \caption{Preprocessing diagram}
  \label{fig:preprocessing}
\end{figure}
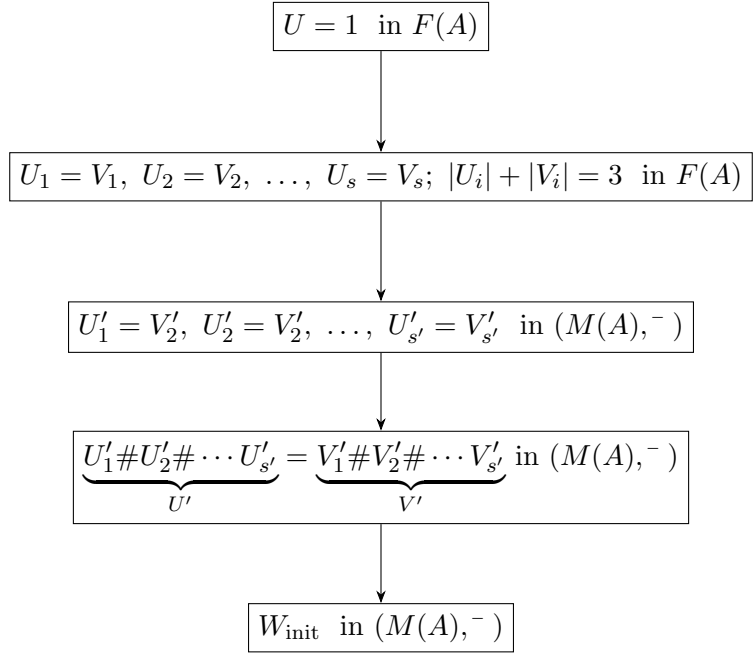


\textbf{Main algorithm.} We construct a finite directed graph (that is, a non-deterministic finite-state automaton) $\cG$ whose edges are labelled by monoid homomorphisms: this FSA will act as the rational control applied to the seed word $\Winit$.  The homomorphisms are over the free monoid with involution $M(C, \ \bar{ \ } \ )$, where $C$ is
 an extended alphabet containing $A$; this allows us to introduce new letters (and remove them as well) when substitutions and compressions are employed. The size of $C$ is bounded in terms of $|A|$.

 \textbf{Getting the solutions.}  Once the graph $\cG$ has been constructed, the crucial (and initially counterintuitive) point is that in order to produce solutions, our algorithm
follows paths backwards from final to initial states, using the transformations labelling the arcs from
 the last to the first one in order to produce the solutions. Thus the direction of each arc used to get the solutions is opposite to that of the homomorphism labelling the arc when the graph was constructed.

\textbf{The vertices of $\cG$.} Each vertex (state) of the graph contains, roughly, the following data: (i) a variation of $\Winit$, that is, a word $W$ of \emph{bounded length} (specifically, $|W|\leq 40|\Winit|$) obtained by applying transformations to $\Winit$, (ii) information about the current alphabet (which is a dynamic subset $B$ of $C$: that is, $B$ depends on the particular state) and variables present in $W$, (iii) the rational constraints imposed on the variables in $W$ (the constraints are implemented as homomorphisms to a finite monoid), and (iv) `types' of variables and letters that help with compressing high powers of a letter in $W$ (see \cite[Definitions 6.1, 6.2]{eqns_free_grps}). The initial state of the automaton $\cG$ corresponds to $\Winit$ together with the initialisation of the rational constraints, and the accept states correspond to any bounded length word $W$ obtained from $\Winit$ such that $W=\overline{W}$ and $W$ does not contain any variables (plus some additional technical requirements used for book keeping \cite[Section 6.2]{eqns_free_grps}).

A key fact is that the graph $\cG$ is finite: the lengths of the words $W$ in the states are bounded in terms of the length of the input $\Winit$, and thus there are only finitely many states.

\textbf{The edges of $\cG$.} In terms of transitions (or edges) between states, there are two main kinds: \emph{substitution arcs} and \emph{compression arcs}. Both edge kinds are labeled by endomorphisms of $M(C, \ \bar{ \ } \ )$.
The main kind of substitution arc is the most intuitive, where $X \rightarrow aX$ or $X \rightarrow Xa$, that is, where a variable $X$ is replaced by $aX$ or $Xa$, with $a\in C$. In certain limited situations substitutions of the form $X \rightarrow 1$ (removing variable $X$), $X \rightarrow X X'$ (splitting variable $X$) or `preparing' for compression by allowing a variable to commute with a letter $a$ (see the meaning of `types' in \cite[Section 3.3]{eqns_free_grps}), are used.
The compression arcs transform the constants, but do not affect the variables. The main kind are \emph{pair} and \emph{block} compression arcs, but renaming of letters and reduction of the alphabet when no information is lost, are also allowed. The pair compression arcs are labeled by maps of the form $h(c)=u$, where $|u|=2$, so sending some letter to a pair of distinct letters, and the block compression arcs have the form $h(c)=c^2$ or $h(c)=ac$, where $c$ is a power of $a$.\

 In a nutshell, considering that the solutions are obtained by following the arcs described above in reverse orientation, we encounter the following transformations:
   \begin{itemize}

      \item[--]  \emph{pop} variables: $X\ra aX$, $X\ra Xa$, or $X \ra \text 1$
         (last one only if the constraint allows it)

      \item[--]  \emph{compress pairs} of constants $ab\ra c$ where $c$ is a new constant.  

   \item[--]  \emph{compress blocks} of letters $aa\dots a\ra a_\ell$ where $a_\ell$ is a new constant.

  \item[--] Eventually, substitute all variables $X\ra \text 1$, $Y\ra  \text 1$ etc, so that two words just in constants remains. If these words are identical we accept the new state together with the transformation, otherwise we discard it.

   \end{itemize}
Note that the first move (pop) increases the length of the equation, but gets us closer to a solution.
 The two compression moves will reduce the length of the equation while enlarging the set of constants. However, we enforce bounds on the sizes of the equation and the current alphabet at each state, so the increase in length or size of alphabet is controlled.

The automaton $\cG$ is constructed in a non-deterministic manner, by guessing the constraints at every vertex and the type of label for an edge: it produces vertices satisfying the definition of an allowable vertex (size of $W$ bounded, appropriate constraints etc), and checks which types of arcs are suitable between a given pair of vertices. Although $\cG$ is of exponential size in terms of $\Winit$, during the algorithm the working space only holds pairs of vertices, together with the arcs between them, which leads to the low space complexity for the algorithm, that is, \NSPACE$(n\log n)$ (see \cite[Section 3.8]{eqns_free_grps}). A very careful trimming of the automaton is also performed.

\begin{ex} A simplified algorithm to solve the equation $aXXba=YYaX$ over $A=\{a,b\}$ is:
\begin{itemize}
\item[--] Start by {\em guessing} the first and last letters of the variables, and pop/substitute:

\medskip 

$X\ra Xa$ \hspace{1cm}  $aXaXaba=YYaXa$

$X\ra b X$ \hspace{1cm}  $abX abX aba=YYabXa$

$Y\ra aY$ \hspace{1cm}  $abX abX aba=aYaYabXa$

$Y\ra Yb$ \hspace{1cm}  $abX abX aba=aYbaYbabXa$

\medskip

%

\item[--] Substitute $X\ra \text 1$  and $Y\ra  \text 1$, so that two words just in constants remains. If these words are identical we use the new vertex as an accept state, otherwise we discard it.

\end{itemize}
Recall that the algorithm works with $\Winit$, which has the form $$\#X\#Y\#aXXba\#YYaX\#\overline{aXXba}\#\overline{YYaX}\#\overline{Y}\#\overline{X}\#.$$
Suppose that we denote the first pop by $\tau_1\colon X \ra Xa$, the second by $\tau_2\colon X \ra bX$, the third by $\tau_3\colon Y \ra aY$, and the fourth by $\tau_4\colon Y \ra Yb$. The  pops transform $X$ into $bXa$ and $Y$ into $aYb$, so the current equation starts with $\#bXa\#aYb \dots$. If we then apply $\tau_5 \colon X\ra \text 1, Y\ra  \text 1$ the current word becomes $\#ba\#ab \#abababa\#abababa\# \dots$, and we have obtained equal words in the middle (with no variables involved). Finally we use a compression $f \colon ba \ra c_1, ab \ra c_2$. Let $C=A \cup \{c_1, c_2\}.$

Now denoting all the maps in the opposite sense of $\tau_i$, $1\leq i \leq 5$, by $h_i$, we note that they are the identity on $C^*$, and the reverse of $f$ is $h_6 \colon c_1 \ra ba\), \(c_2 \ra ab$.
We get the solutions for $X$, $Y$ by applying the maps $h_1, \dots, h_6$ to the seed $\#c_1\#c_2\#\dots$, that is, $(c_1)h_6  \dots h_1$, $(c_2)h_6  \dots h_1$ and get that $X=ba, Y=ab$ satisfy the initial equation.
\end{ex}

The challenging part, once the graph $\cG$ has been constructed, is to show that all solutions are obtained from it, and no incorrect ones have been introduced in the process.
One uses probabilistic arguments to show that the bounds we impose on the equation and alphabet sizes are not restrictive, and that we indeed obtain all solutions from $\cG$ (see \cite[Lemma 3.34]{eqns_free_grps}).

\subsection{Sketch of the algorithm for torsion-free hyperbolic groups.}


Let $G$ be a torsion-free hyperbolic group with a finite, inverse-closed generating set $S$, let $F(S)$ be the free group on $S$, and let $\pi$ be the natural projection from $F(S)$ to $G$. Given constants $\lambda, \mu \geq 0$, we will say that a word is a $(G, S,\lambda, \mu)$-quasi-geodesic if it is a word over $S$ that represents a $(\lambda, \mu)$-quasigeodesic in $G$.

Take as input an equation of length $n$ over a set of variables $\mathcal{X}=\{X_1, \dots, X_k\}$. As in the free group case, we first transform the input equation into a triangular system $\Phi$ of equations in $G$, that is, a system where each equation has length $3$. This is done by introducing new variables, and generates $\Oh(n)$ equations (see Section \ref{sec:freegp} and \cite[Section 4]{eqns_free_grps} for details).

 From now on assume that the system $\Phi$ consists of $q\in \Oh(n)$ equations of the form  $X_jY_j=Z_j$, where $1\leq j \leq q$. In a hyperbolic group, the direct reduction of a triangular equation to a system of cancellation-free equations, done previously from the free group to the free monoid with involution, is no longer possible. Instead of looking for geodesic solutions $(g_1,g_2,g_3)$ in $G$ to an equation $XY=Z$, represented by a geodesic triangle in the Cayley graph of $G$ as on the left side of Figure \ref{fig:triangle}, one finds the solutions $g_i$ via their canonical representatives $\theta(g_i)$.
 Canonical representatives of elements in torsion-free hyperbolic groups were defined by Rips and Sela in \cite{rips_sela}, and we refer the reader to \cite{rips_sela} for their construction, and to \cite[Appendix C]{CE2019} for a basic exposition.

Let $g\in G$. For a fixed integer $T\geq 1$, called the \textit{criterion}, the canonical representative $(g)\theta_T$ of $g$ with respect to $T$ is a $(G,S,\lambda, \mu)$-quasi-geodesic word over $S$ which satisfies $(g)\theta_T =_G g$, $(g\theta_T)^{-1}=(g^{-1})\theta_T$. If $T$ is well-chosen, a number of combinatorial stability properties make canonical representatives with respect to $T$ particularly suitable for solving triangular equations in hyperbolic groups.
It is essential for the language characterisations in our main results that canonical representatives with respect to $T$ are $(G,S,\lambda, \mu)$-quasi-geodesics, with $\lambda$ and $\mu$ depending only on $\delta$, and not on $T$. We therefore write $(g)\theta$ instead of $(g)\theta_T$.

By \cite{rips_sela} there exist $y_i, c_i \in F(S)$ such that the equations $(g_1)\theta=y_1c_1y_2$, $(g_2)\theta=y_2^{-1}c_2y_3$, $(g_3)\theta=y_3^{-1}c_3y_1^{-1}$ are cancellation-free (no cancellation occurs between $y_1$ and $c_1$, $c_1$ and $y_2$ etc.) equations over $F(S)$. Moreover, the $y_i$s should be viewed as `long' prefixes and suffixes that coincide, and the $c_1 c_2 c_3$ as a `small' inner circle with circumference in $\mathcal{O}(n)$, like in Figure \ref{fig:triangle}.

\begin{figure}[ht]
    \centering
        \begin{tikzpicture}[font=\sffamily]
\draw[black] (0,0) .. controls  (2.5,1) ..  (3,3) ;
\draw[black]  (0,0) .. controls  (3,.5) .. (6,-1);
\draw[black]  (6,-1) .. controls  (4,0) .. (3,3);

\draw (2.2,1.3) node [label=$g_1$]  {};
\draw   (3.9,1.1) node [label=$g_2$]{};
\draw   (2.8,-.4) node [label=$g_3$] {};

\end{tikzpicture}
               \begin{tikzpicture}[font=\sffamily]
\draw (0,0)  parabola (2.3,.9);
\draw (6,-1)   parabola (3.9,.6);
\draw (2.9,1.5) parabola  (3,3);

\draw (2.3,.9)  .. controls (3,.5) .. (3.9,.6);
\draw (2.9,1.5)   .. controls (3.5,1.5) ..(3.9,.6);
\draw (2.3,.9)   .. controls (2.5,1.5) ..(2.9,1.5);

\draw (2.2,1.2) node [label=$c_1$]  {};
\draw   (3.9,1) node [label=$c_2$]{};
\draw   (3,-.1) node [label=$c_3$] {};

\draw (1,.2) node [label=$y_1$]  {};
\draw   (3.4,2) node [label=$y_2$]{};
\draw   (5,-.5) node [label=$y_3$] {};

\end{tikzpicture}

    \caption{Solutions to $XY=Z$ in a hyperbolic group.}\label{fig:triangle}
\end{figure}
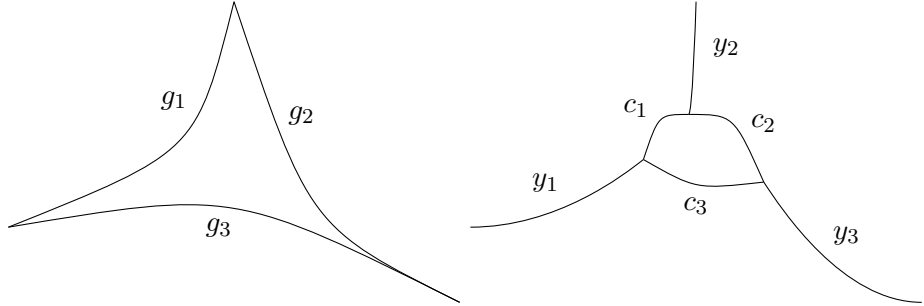

\noindent\textbf{Lifting $\Phi$ to the free group $F(S)$.}
Let $b$ be a constant that depends on $\delta$ and linearly on $q$ as in \cite{dahmani}. We run in lexicographic order through all possible tuples of words $$\mathbf c=(c_{11},c_{12},c_{13},\dots, c_{q1},c_{q2},c_{q3})$$ with $c_{ji}\in S^*$ and $|c_{ji}|_S\leq b$.

 Then for each tuple $\mathbf c$ we use Dehn's algorithm to check whether $c_{j1}c_{j2}c_{j3}=_G1$; if this holds for all $ j $ we construct a system $\Phi_{\mathbf c}$ of equations of the form
\begin{equation}\label{csystem}
X_j=P_jc_{j1}Q_j, \ Y_j=Q_j^{-1}c_{j2}R_j, \ Z_j=P_jc_{j3}R_j, \ \ 1\leq j\leq q,
\end{equation}
where $P_i, Q_i, Z_i$ are new variables. This new system has size $\Oh(n^2)$.
 In order to avoid an exponential size complexity we write down each system $\Phi_{\mathbf c}$ one at a time, so the space required for this step is $\Oh(n^2)$.
Let $\mathcal{Y}\supset \mathcal X$, $|\mathcal{Y}|=m$, be the new set of variables, including all the $P_i, Q_i, Z_i$.

Note that any solution to  $\Phi_{\mathbf c}$
in the free group $F(S)$ is clearly a solution to $\Phi$ in the original hyperbolic group $G$ when restricting to the original variables $\mathcal X$. That is, if $(w_1, \dots, w_m) \subseteq F^m(S)$ is a solution to $\Phi_{\mathbf c}$, then $((w_1)\pi, \dots, (w_k)\pi)$ is a solution to $\Phi$ in $G$. This shows the soundness of the algorithm.

Now let $\lambda_G \geq 1, \mu_G\geq 0$ be the constants provided by \cite[Proposition 3.4]{dahmani}.


\medskip

\noindent \textbf{Solving equations with constraints in $F(S)$.}
We now run  the algorithm for solving free group equations with rational constraints from \cite{eqns_free_grps} (which we refer to as tCDE algorithm) which takes input $\Phi_\mathbf{c}$ of size $\Oh(n^2)$, plus for each $Y\in \mathcal Y$ the rational constraint that the solution for  $Y$ is a $(G,S,\lambda_G, \mu_G)$-quasi-geodesic.
Since these constraints have constant size (depending only on $G$, not the system $\Phi$),  they do not contribute to the $\Oh(n^2)$ size of the input to the CDE algorithm.

We modify the CDE algorithm to ensure every node printed by the algorithm includes the additional label $\mathbf c$. (This ensures the FSA we print for each system $\Phi_{\mathbf c}$ is distinct.)
This does not affect the complexity since $\mathbf c$ has size in $\Oh(n^2)$. Let $\mathcal C$ be the set of all tuples ${\mathbf c}$.
We run the modified CDE algorithm described above  (adding label $\mathbf c$ for each state printed) to print an FSA (possibly empty) for each $\Phi_{\mathbf c}$, which is the rational control for an EDT0L grammar that produces all solutions as freely reduced words in $F(S)$, which correspond to solutions as $(G,S,\lambda_G,\mu_G)$-quasi-geodesics to the same system $\Phi_{\mathbf c}$ in the hyperbolic group.

Note that if $(g_1,\dots, g_k)\in  G^k$ is a solution to $\Phi$ in the original hyperbolic group,  \cite{rips_sela}
guarantees that there exist canonical representatives $w_i$ that are $(G,S,\lambda_G, \mu_G)$-quasi-geodesics with $w_i=_Gg_i$, which have freely reduced forms $u_i=_Gw_i$ for $1\leq i\leq m$, and our construction is guaranteed to capture any such collection of words. More specifically, if $(w_1,\dots, w_r)$ is a solution in canonical representatives to $\Phi$
then $(u_1,\dots, u_k,\dots u_{|\mathcal Y|})$  will be included in the solution to $ \Phi_{\mathbf c}$ produced by the CDE algorithm, with $u_i$ the reduced forms of $w_i$ for $1\leq i\leq m$. This shows completeness once we union the grammars from all systems $\Phi_{\mathbf c}$ together.

To produce the solutions in $G$, we add a new start state with edges to each of the start states of the FSAs with label $\mathbf c$, for all $\mathbf c \in \mathcal C$. We obtain an automaton which is the rational control for the language $L$ of solutions to the input equation. This rational control makes $L$ an EDT0L language of $(G, S, \lambda, \mu)$-quasi-geodesics, where $\lambda:=\lambda_G, \mu:=\mu_G$.
The space needed is exactly that of the CDE algorithm for free groups on input $\Oh(n^2)$, which is $\NSPACE(n^2\log n)$.

Further steps are required to produce the solutions in terms of shortlex representatives (see \cite[Theorem 7.4(2)]{eqns_hyp_grps}), but these do not affect the language or space complexity.

\section{Virtually abelian groups}\label{sec:virtab}

The class of virtually abelian groups is special among the groups discussed in this paper, as a stronger statement about solution sets can be made. This is due to the proximity of virtually abelian groups to free abelian groups, where linear algebra techniques can be employed, and the structure of solutions has several equivalent algebraic, geometric and combinatorial descriptions.

More specifically, we describe solution sets in virtually abelian groups as $m$-regular languages, which is strictly stronger than describing them as EDT0L. The  definition of $m$-regular languages is similar to that of regular languages seen in Section 2.3. Let $A$ be a set. We write $A^n$ for the cartesian product $A \times \cdots \times A$ of $n$ copies of $A$, and write $\varepsilon$ for the empty word.

\begin{dfn}\label{def:fsa}
	An \emph{(asynchronous), $n$-variable finite-state automaton ($n$-FSA)} $\mathcal{A}$ is a \(4\)-tuple $(\Sigma,\Gamma,s_0,F)$, where
	\begin{enumerate}
		\item $\Sigma$ is a finite alphabet,
		\item $\Gamma$ is a finite directed graph with edges labelled by elements of $(\Sigma\cup \{\varepsilon\})^n$ which have at most one non-$\varepsilon$ entry,
		\item $s_0\in V(\Gamma)$ is a chosen start vertex/state,
		\item $F\subseteq V(\Gamma)$ is a set of accept or final vertices/states.
	\end{enumerate}
	An $n$-tuple $\textbf{w}\in(\Sigma^*)^n$ is accepted by $\mathcal{A}$ if there is a directed path in $\Gamma$ from $s_0$ to $s \in F$ such that $\textbf{w}$ is obtained by reading the labels on the path, and deleting all occurrences of $\varepsilon$ in each coordinate. A language accepted by an $n$-FSA will be called \emph{$n$-regular} for short.
\end{dfn}
\begin{rmk}\label{rem:n-reg}\leavevmode
\begin{enumerate}
\item[(1)] Note that Definition \ref{def:fsa} gives a non-deterministic FSA, in that (i) edges labelled by tuples consisting entirely of $\epsilon$-coordinates are allowed, as are (ii) two edges with the same label starting at a single vertex. If we do not allow (i) and (ii) we have a \emph{deterministic} $n$-FSA.
\item[(2)] Definition \ref{def:fsa} coincides with the usual definition of a regular language when $n=1$. However, for $n>1$, the classes of languages accepted by deterministic and non-deterministic $n$-FSA do not coincide. See \cite[Exercise 2.10.3]{groups_langs_aut} for an example.
\end{enumerate}
\end{rmk}

We collect the characterisations of solution sets in virtually abelian groups below.

\begin{theorem}\label{thm:vab}
Let $G$ be a finitely generated virtually abelian group. Then a solution set over $G$
\begin{enumerate}
\item is a rational subset of $G^n$, for some $n\geq 1$ \cite[Theorem 1.1 (2)]{CE2022};
\item can be described by a set of geodesic representatives that form an $m$-regular, for some $m\geq 1$, and therefore EDT0L language, over any generating set of $G$ \cite[Theorem 1.3 (4)]{CE2022};
\item can be described, in terms of the natural normal form over the set of generators $X=\Sigma \cup T$ (where $\Sigma$ is a symmetric set of generators for a finite index free abelian subgroup of $G$, and $T$ is a finite transversal set for this subgroup), as an $m$-regular, and therefore EDT0L language. An EDT0L
system for this EDT0L language is
constructible in \(\ns(n^2)\) (Corollary 3.12 and Proposition 3.15 of \cite{VAEP} or Theorem 1.4 (4) of \cite{CE2022}).

\end{enumerate}

\end{theorem}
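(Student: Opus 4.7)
The plan is to reduce equations over $G$ to linear Diophantine systems over $\mathbb{Z}^k$, exploit the semi-linear/rational correspondence there, and then transport the resulting structure back to $G$ via a transversal.

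First I would pass to a finite-index free abelian normal subgroup $H \cong \mathbb{Z}^k$ of $G$ with transversal $T$, so that every element of $G$ is uniquely expressible as $th$ with $t \in T$ and $h \in H$. Given a system of equations in $G$ with $n$ variables $X_1, \ldots, X_n$, I would non-deterministically guess a tuple $(t_1, \ldots, t_n) \in T^n$ and substitute $X_i = t_i H_i$, with each $H_i$ a new variable ranging over $H$. Using normality of $H$ and conjugation by the finitely many elements of $T$, each equation collapses, for every fixed choice of transversal tuple, into an equation that (after pushing all $T$-elements to one side) reduces to an identity in $H \cong \mathbb{Z}^k$. Writing the $H_i$ in coordinates $(h_{i,1}, \ldots, h_{i,k}) \in \mathbb{Z}^k$, this identity is exactly a system of linear Diophantine equations in $nk$ integer variables.

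The solution set of a linear Diophantine system in $\mathbb{Z}^N$ is well-known to be a semi-linear set, i.e.\ a finite union of sets of the form $b + \mathbb{Z}_{\geq 0} v_1 + \cdots + \mathbb{Z}_{\geq 0} v_s$. By the classical theorem of Eilenberg--Sch\"utzenberger (and Ginsburg--Spanier for $\mathbb{Z}^N$), semi-linear subsets of $\mathbb{Z}^N$ are precisely the rational subsets. Taking the finite union over the choices of $(t_1, \ldots, t_n) \in T^n$ and remembering that $X_i = t_i H_i$ then yields a rational subset of $G^n$, proving (1). For (2) and (3), I would encode each semi-linear piece as an $n$-FSA in the sense of Definition \ref{def:fsa}: the linear basis vectors become loops at a state, the base point becomes a finite prefix, and the non-$\varepsilon$ constraint of Definition \ref{def:fsa} is respected by interleaving the generators coordinate by coordinate. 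This gives an $m$-regular description over the standard generators of $H$, which extends to the generating set $X = \Sigma \cup T$ by prepending the transversal element $t_i$ to each component.

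The subtler part is (2), where one asks for \emph{geodesic} representatives over an \emph{arbitrary} generating set. Here I would use that, in a virtually abelian group, geodesic words with respect to any generating set can be described by a regular normal form (after decomposing according to the asymptotic cone/word-length polytope), and that the semi-linearity of the solution set is preserved under changing to any rational normal form. The main obstacle I expect is bookkeeping: one must show that the $n$-FSA produced for each transversal tuple can be constructed in $\ns(n^2)$, which requires writing the linear system in polynomial space, solving it via a Hermite normal form or similar representation within the stated space bound, and ensuring that the non-deterministic union over the exponentially many transversal tuples in $T^n$ is organised so that only one branch is materialised at a time on the work tape. Once those bounds are verified, the final EDT0L system is obtained by converting the $m$-regular automaton into an EDT0L system in the style of Theorem \ref{EDT0L_closure_properties_thm}, which preserves the space complexity.
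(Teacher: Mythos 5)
Your reduction for parts (1) and (3) --- pass to a finite-index free abelian normal subgroup $H\cong\mathbb{Z}^k$, guess the transversal parts $t_i\in T$, use normality to push the $H$-parts to one side so that each fixed guess yields a linear Diophantine system over $\mathbb{Z}^{nk}$, and take the finite union over $T^n$ --- is essentially the route taken in the works the survey cites (\cite{VAEP}, \cite{CE2022}); the survey itself gives no proof, only these citations together with Proposition \ref{prop:forgetful}, which is the correct bridge from $m$-regular to EDT0L (your final appeal to Theorem \ref{EDT0L_closure_properties_thm} should really be to Proposition \ref{prop:forgetful}). The solution sets of the linear systems are affine sublattices, hence semilinear and rational in $\mathbb{Z}^{nk}$, and transporting back through $(t_1,\dots,t_n)$ gives (1); encoding base points and period vectors in an asynchronous $n$-FSA and prepending the $t_i$ gives (3). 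Your plan for the $\ns(n^2)$ bound (Hermite/Smith normal form computed branch by branch over the $|T|^n$ transversal tuples, with only the current branch on the work tape) has the right shape, though you still need to bound the bit-size of the lattice data to make it stick.

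The genuine gap is part (2). You assert that ``semi-linearity of the solution set is preserved under changing to any rational normal form'', but that is not a usable principle: the fact that the geodesics of a virtually abelian group form a regular language over an arbitrary generating set does not by itself imply that some choice of geodesic representatives of a semilinear subset of $G^n$ is $m$-regular, since an arbitrary geodesic cross-section may distribute letters in a way that is not controlled by the affine data of the solution lattice, and nothing in your sketch rules this out. The actual argument behind the cited Theorem 1.3(4) of \cite{CE2022} builds on Benson's polyhedral machinery for virtually abelian growth: it decomposes $G$ coset-wise, shows that word length with respect to the arbitrary generating set is piecewise affine on polyhedral pieces of $\mathbb{Z}^k$, chooses on each piece a ``patterned'' geodesic representative whose letter multiplicities are affine functions of the $H$-coordinates, and then checks that the image of the (coset-like, semilinear) solution set under this parametrisation is again semilinear, hence $m$-regular. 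Without this construction --- or some substitute showing that your chosen geodesic representatives depend affinely (Presburger-definably) on the $H$-coordinates --- part (2) remains unproved in your proposal.
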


To our knowledge, virtually abelian groups are the only groups where solution sets are rational.


The following proposition, which is similar to Lemma 2.20(1) of \cite{VAEP}, provides the bridge between $m$-regular and EDT0L languages that appears in the statements in Theorem \ref{thm:vab} (2) and (3).

\begin{proposition}\label{prop:forgetful}
	Let $L\subset(\Sigma^*)^n$ be an $n$-regular language. If $\theta_{\#}\colon(\Sigma^*)^n\to(\Sigma\cup \{\#\})^*$ is the map $(w_1,w_2,\ldots,w_n)\mapsto w_1\#^{\epsilon_1} w_2\#^{\epsilon_2} \cdots \#^{\epsilon_n} w_n$,
 where \(\epsilon_i \in \{0, \ 1\}\) for all \(i\), that inserts a $\#$ between some of the tuple coordinates, then $(L)\theta_{\#}\subset (\Sigma \cup \{\#\})^*$ is EDT0L.
\end{proposition}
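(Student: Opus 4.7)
The plan is to convert the $n$-FSA $\mathcal{A} = (\Sigma, \Gamma, s_0, F)$ accepting $L$ directly into an EDT0L system whose rational control is essentially a relabelling of $\Gamma$. The key idea is to introduce, for each coordinate $i \in \{1, \ldots, n\}$, a fresh marker letter $\perp_i$ that tracks the position at which the word $w_i$ built up in that coordinate should be deposited into the output.

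Concretely, I would take the terminal alphabet to be $\Sigma \cup \{\#\}$, the extended alphabet to be $C = \Sigma \cup \{\#, \perp_1, \ldots, \perp_n\}$, and the seed word to be $\omega = \perp_1 \#^{\epsilon_1} \perp_2 \#^{\epsilon_2} \cdots \#^{\epsilon_{n-1}} \perp_n$, a skeleton of a generic image under $\theta_{\#}$. To every edge of $\Gamma$ whose unique non-$\varepsilon$ entry is a letter $a \in \Sigma$ in coordinate $i$, I would assign the endomorphism $\phi_{i,a} \in \End(C^\ast)$ defined by $\perp_i \mapsto a \perp_i$ and the identity on every other letter of $C$; to each edge labelled by the all-$\varepsilon$ tuple, I would assign the identity. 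I would then adjoin a new accept state $s_f$ together with a $\psi$-labelled edge from every state of $F$ into $s_f$, where $\psi \in \End(C^\ast)$ sends each $\perp_i$ to $\varepsilon$ and fixes $\Sigma \cup \{\#\}$. The relabelling of this modified graph gives a rational subset $\mathcal{R}$ of $\End(C^\ast)$, and the EDT0L system is $(\Sigma \cup \{\#\}, C, \omega, \mathcal{R})$.

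Correctness then follows from a direct induction on path length. Any accepting path from $s_0$ to some $s \in F$ whose $i$-th coordinate reads the word $w_i$ relabels to a sequence of endomorphisms whose composition sends $\omega$ to $w_1 \perp_1 \#^{\epsilon_1} \cdots \#^{\epsilon_{n-1}} w_n \perp_n$; here the one subtle point is that, with functions written on the right, the choice $\perp_i \mapsto a \perp_i$ (rather than $\perp_i \mapsto \perp_i a$) is precisely what causes successive letters read in coordinate $i$ to accumulate to the left of $\perp_i$ in the correct temporal order. Postcomposing with $\psi$ then yields exactly $(w_1, \ldots, w_n)\theta_{\#} \in (\Sigma \cup \{\#\})^\ast$. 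Conversely, any element of $\mathcal{R}$ whose image on $\omega$ lies in $(\Sigma \cup \{\#\})^\ast$ must end with a $\psi$-transition (otherwise some $\perp_i$ survives into the terminal string), and so corresponds to an accepting path of $\mathcal{A}$, whence its image lies in $(L)\theta_{\#}$.

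There is no real obstacle beyond the ordering check above; the argument is largely bookkeeping. The construction is essentially a multi-track generalisation of the EDT0L system built earlier for the solution language $\{w \# w \mid w \text{ freely reduced}\}$ of $XY^{-1} = 1$ over the free group $F(a,b)$, with one marker per coordinate instead of two.
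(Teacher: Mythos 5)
Your construction is correct and is essentially the intended argument: the paper gives no proof itself but points to Lemma 2.20(1) of \cite{VAEP}, and the standard proof there is exactly this marker-per-coordinate relabelling of the $n$-FSA, which is also the multi-track version of the system the paper builds for $\{w \# w \mid w \text{ freely reduced}\}$. Your ordering check for $\perp_i \mapsto a\perp_i$ under right action is the right subtlety to verify, and the remaining details (the $\psi$-edges into a final state and the terminal-alphabet intersection removing any word still containing some $\perp_i$) are handled correctly.
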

\begin{rmk}
	By considering the case \(n=1\), it is clear that the converse of Proposition \ref{prop:forgetful} does not hold (since there are EDT0L languages which are not regular), and so being \(n\)-regular is a strictly stronger property than being EDT0L.
\end{rmk}

\section{The Heisenberg group and Diophantine equations over the ring of integers}\label{sec:Heis}

  \begin{dfn}
    The \textit{Heisenberg group} \(H(\mathbb{Z})\) is the class \(2\)
    nilpotent group defined by the presentation
    \[
      H(\mathbb{Z}) = \langle a, \ b, \ c \mid c = [a, \ b], \  [a, \ c] =
      [b, \ c] = 1 \rangle.
    \]
    The generator \(c\) is redundant, however we include it as it is usually
    easier to work with the generating set \(\{a, \ b, \ c\}\) than \(\{a, \
    b\}\).

    The \textit{Mal'cev generating set} for the Heisenberg group is the set
    \(\{a, \ b, \ c\}\).
  \end{dfn}

  \begin{dfn}
    The \textit{Mal'cev normal form} for the Heisenberg group is the normal
    form that maps an element \(g \in H(\mathbb{Z})\) to the unique word of
    the form \(a^i b^j c^k\), where \(i, \ j, \ k \in \mathbb{Z}\), that
    represents \(g\).
  \end{dfn}

  The fact that this is a (unique) normal form for \(H(\mathbb{Z})\) is
  well-known (see, for example \cite{DuchinLiangShapiro}).

  Whilst it is undecidable whether a system of equations in the Heisenberg group
  admits a solution \cite{DuchinLiangShapiro}, there is an algorithm that
  determines whether a single equation admits a solution
  \cite{DuchinLiangShapiro}. Even in class \(2\) nilpotent groups in general,
  the satisfiability of single equations is undecidable \cite{Romankov2016}, so
  single equations in the Heisenberg group are close to the `boundary' of
  decidability. It remains open whether the solution language to a single
  equation in the Heisenberg group is EDT0L. In the one-variable case, however,
  there is a positive answer:

  \begin{theorem}[\cite{EDT0L_heisenberg}]
    \label{Heisenberg_eqns_EDT0L_thm}
    Let \(L\) be the solution language to a single equation with one
    variable in the Heisenberg group, with respect to the Mal'cev generating
    set and normal form. Then
    \begin{enumerate}
      \item The language \(L\) is EDT0L;
      \item An EDT0L system for \(L\) is constructible in \(\ns(
      n^8(\log n)^2)\).
    \end{enumerate}
  \end{theorem}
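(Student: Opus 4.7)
The plan is to reduce the one-variable equation in $H(\mathbb{Z})$ to a Diophantine system in the integer exponents of the Mal'cev normal form, solve this system using classical number theory, and then repackage the answer as an EDT0L language. Concretely, fix the multiplication rule in Mal'cev coordinates, which has the shape $(a^{i_1}b^{j_1}c^{k_1})(a^{i_2}b^{j_2}c^{k_2}) = a^{i_1+i_2}b^{j_1+j_2}c^{k_1+k_2+\sigma(i_1,j_1,i_2,j_2)}$, where $\sigma$ is a fixed bilinear form coming from the commutator relation. Substituting $X = a^i b^j c^k$ (with the analogous formula for $X^{-1}$) into $w(X) = 1$ and reducing to Mal'cev normal form produces a system of three equations in the unknowns $i,j,k \in \mathbb{Z}$: two linear equations $L_1(i,j) = 0$ and $L_2(i,j) = 0$ (the $a$- and $b$-exponents, which carry no $k$ since $c$ is central), and one equation $Q(i,j) + \lambda k = \mu$ (the $c$-exponent), where $Q$ is a polynomial of degree at most two in $i,j$ and all coefficients are integers of size polynomial in $n$.

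I would then solve the linear subsystem $L_1 = L_2 = 0$ first: either it is infeasible (the solution language is empty), or by elementary linear algebra over $\mathbb{Z}$ its solutions form an affine sublattice parametrised by some $(i(t), j(t)) = (\alpha + \beta t,\gamma + \delta t)$ for $t \in \mathbb{Z}$, or the pair $(i,j)$ is entirely free. Substituting into $Q(i,j) + \lambda k = \mu$ yields, after clearing denominators, a \emph{single} quadratic Diophantine equation in at most two remaining integer unknowns. When $\lambda \neq 0$ and $(i,j)$ is fixed or one-parameter, $k$ is determined up to an arithmetic-progression congruence on $t$; when the $k$-coefficient vanishes we get a pure quadratic Diophantine equation $Q(i,j) = \mu$ in two variables, which is a conic over $\mathbb{Z}$ and whose integer solutions are described by the classical theory: a finite union of singleton sets, arithmetic-progression parametrisations (for the degenerate and parabolic cases), or (in the hyperbolic/Pell case) finitely many orbits of the action of a Pell-like recurrence on a finite set of base solutions.

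The main obstacle is packaging these integer parametrisations of solutions $(i,j,k)$ as an EDT0L language in the Mal'cev normal form $a^i b^j c^k \in \{a^{\pm 1}, b^{\pm 1}, c^{\pm 1}\}^*$. For the polynomial-image and arithmetic-progression cases, an EDT0L system analogous to the $\{a^{n^2}\}$-example above suffices: one uses a small extended alphabet of markers together with two endomorphisms that iterate to produce $a^{P(t)}b^{R(t)}c^{S(t)}$ for $t \geq 0$ (second differences of a degree-$2$ polynomial are constant, so the increments are bounded data), together with a terminal homomorphism replacing the markers by letters. The Pell-orbit case needs more care: here I would exploit that the fundamental solution acts by a fixed $2 \times 2$ matrix with integer entries, so iterating this action is precisely an EDT0L iteration on a fixed-size seed (using the closure of EDT0L under finite unions to handle the finitely many base solutions and sign choices). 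In all cases, taking the finite union of the resulting EDT0L languages, and intersecting with the regular normal-form language $\{a^i b^j c^k\}$, yields an EDT0L language for the full solution set, invoking Theorem~\ref{EDT0L_closure_properties_thm}.

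For the complexity bound $\mathsf{NSPACE}(n^8 (\log n)^2)$, I would track the coefficient growth carefully: translating $w(X)=1$ into the Diophantine system produces integer coefficients of bit-length $\mathcal{O}(\log n)$ and the quadratic form $Q$ has coefficients polynomial in $n$; the solution of the linear subsystem and the reduction to a normal form of the conic (Smith normal form plus completion of the square) preserve polynomial coefficient size. The dominant contribution comes from enumerating and writing down the parametrisations in the Pell case, where one must guess the fundamental solution and its discriminant, giving a search whose work-tape usage is bounded by a polynomial in $n$ with logarithmic factors for storing integers; a direct accounting of these polynomial factors, done in detail in \cite{EDT0L_heisenberg}, yields the stated $n^8(\log n)^2$ bound.
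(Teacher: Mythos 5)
Your proposal follows essentially the same route as the paper: reduce the one-variable equation in $H(\mathbb{Z})$ via Mal'cev coordinates to two-variable quadratic Diophantine equations over $\mathbb{Z}$, invoke the classical (Lagrange/Pell) theory of conics, realise the Pell recurrence on the fundamental solution as an iterated endomorphism of an extended alphabet (exactly the paper's construction with the marker letters and rational control $\varphi^\ast\theta$), and assemble the cases with the closure properties of Theorem~\ref{EDT0L_closure_properties_thm}, together with the same style of coefficient/space bookkeeping. The one point you gloss over is the step the paper singles out as the genuinely difficult one — showing that the linear combinations with division, i.e.\ the values $(\alpha x+\beta y+\gamma)/\delta$ obtained when translating Pell solutions back to the original unknowns (with the attendant divisibility conditions), still form an EDT0L language in unary — but your overall strategy coincides with the paper's.
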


  The proof of Theorem \ref{Heisenberg_eqns_EDT0L_thm} starts by reducing
  the question to two-variable quadratic equations in the ring of integers.
  The majority of the proof is then to show that these equations have EDT0L
  solution languages.
  \begin{theorem}
    \label{quad_eqn_thm}
    The solution language to a two-variable quadratic equation \(\mathcal E\):
    \[
    \alpha X^2 +
    \beta XY + \gamma Y^2 + \delta X + \epsilon Y + \zeta = 0
    \]
    in integers is
    EDT0L, accepted by an EDT0L
    system that is constructible in \(\ns(
    n^4 \log n)\), with the input size taken to be \(\max(|\alpha|, |\beta|,
    |\gamma|, |\delta|, |\epsilon|, |\zeta|)\).
  \end{theorem}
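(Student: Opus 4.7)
The approach is a case analysis on the discriminant $D = \beta^2 - 4\alpha\gamma$ of the quadratic form, reducing the equation in each case to a structurally simpler problem whose solution set can then be described as an EDT0L language over $\{a, a^{-1}, \#\}$.

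First I would dispose of the degenerate cases. If $\alpha=\beta=\gamma=0$, the equation is linear and its solution set in $\mathbb{Z}^2$ is an affine sublattice, which is EDT0L (indeed $2$-regular, via Proposition~\ref{prop:forgetful}). If $D$ is a positive perfect square or $D=0$, then after completing the square the quadratic form factors over $\mathbb{Z}$ as a product of two linear forms, and $\mathcal{E}$ reduces to finitely many linear subproblems. If $D<0$, elementary bounds (the quadratic form is positive definite up to sign) force the solution set to be finite, hence trivially EDT0L. The substantive case is $D>0$ with $D$ not a perfect square.

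In that case, a standard linear substitution (e.g.\ $U = 2\alpha X + \beta Y + \delta$ when $\alpha\neq 0$) converts $\mathcal{E}$ into a generalised Pell equation $U^2 - D V^2 = N$ where $N$ is determined by the coefficients. Classical number theory tells us the integer solution set is a finite union of orbits under the cyclic action of the fundamental unit $u_1 + v_1\sqrt{D}$ of $\mathbb{Z}[\sqrt{D}]$: each orbit has the form $\{(u_n,v_n) : n\in\mathbb{Z}\}$ with $u_n + v_n\sqrt{D} = (u_0 + v_0\sqrt{D})(u_1 + v_1\sqrt{D})^n$, equivalently $\begin{pmatrix} u_{n+1} \\ v_{n+1}\end{pmatrix} = M \begin{pmatrix} u_n \\ v_n\end{pmatrix}$ with $M = \begin{pmatrix} u_1 & Dv_1 \\ v_1 & u_1\end{pmatrix}\in \mathrm{SL}_2(\mathbb{Z})$. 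For each orbit I would produce one EDT0L subsystem as follows: use an extended alphabet containing auxiliary letters $b_L, c_L, b_R, c_R$, take the seed word $\omega = b_L^{u_0} c_L^{v_0}\, \#\, b_R^{u_0} c_R^{v_0}$, and define the table $\sigma$ by $b_L\mapsto b_L^{u_1} c_L^{v_1}$, $c_L\mapsto b_L^{Dv_1} c_L^{u_1}$, and symmetrically on the $R$-side; let $\sigma^{-1}$ be the analogous table built from $M^{-1}\in\mathrm{SL}_2(\mathbb{Z})$. Since EDT0L tables act letter-by-letter, iterating $\sigma$ (or $\sigma^{-1}$) preserves the letter counts on each side as exactly $(u_n,v_n)$ (though not their order). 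A terminating table $\tau$ with $b_L, c_R\mapsto a$ and $c_L, b_R\mapsto\varepsilon$ then contracts the left side to $a^{u_n}$ and the right side to $a^{v_n}$, yielding $a^{u_n}\,\#\,a^{v_n}$. The rational control $\{\sigma, \sigma^{-1}\}^\ast\tau$ generates the whole orbit; taking a finite union over orbit representatives, over sign patterns (to handle signed-unary output), and over the cases produced by preprocessing assembles the full solution language via the closure properties of \cref{EDT0L_closure_properties_thm}.

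The main obstacle is meeting the complexity bound $\ns(n^4\log n)$. The fundamental solution $(u_1, v_1)$ can have magnitude $\exp(\Oh(\sqrt{D}))=\exp(\Oh(n))$, but its bit-length is only $\Oh(n)$, so it can be guessed nondeterministically and verified as the minimal positive solution to $U^2-DV^2=1$ within polynomial space; analogous classical bounds control the orbit representatives $(u_0, v_0)$. The tables $\sigma, \sigma^{-1}$ then have images of length $\Oh(n)$ per generator, the finite union ranges over polynomially many cases, and the rational-control graph is of polynomial size. Bookkeeping all of this arithmetic verification, the enumeration of preprocessing cases, and the orbit union within $\ns(n^4\log n)$ is the technically delicate part of the argument, and is where the exponent $4$ in the bound will be absorbed.
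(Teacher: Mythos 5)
Your treatment of the (generalised) Pell equation itself is essentially the paper's: distinct letters for each coordinate, one endomorphism realising the fundamental-unit matrix on letter counts, a final collapsing map, and signs handled by homomorphic images and finite unions. The genuine gap is the return journey from the Pell equation to \(\mathcal{E}\). Lagrange's method does not identify the two solution sets: the solutions of \(\mathcal{E}\) are recovered from Pell solutions \((x,y)\) as affine expressions with division, of the shape \(\bigl((\alpha x+\beta y+\gamma)/\delta,\ (x-\lambda)/\mu\bigr)\), and only those Pell solutions satisfying the divisibility conditions pull back to integer solutions of \(\mathcal{E}\). Your construction outputs \(a^{u_n}\#a^{v_n}\), i.e.\ the solutions of the auxiliary equation, and then appeals to Theorem~\ref{EDT0L_closure_properties_thm} to ``assemble the full solution language''; but none of the operations listed there (finite union, intersection with a regular set, concatenation, Kleene star, homomorphic image) performs this back substitution. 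Forming \(\alpha x+\beta y+\gamma\) from the two unary coordinates requires cancellation when the coefficients have mixed signs, which free monoid homomorphisms cannot do; division by a constant is an inverse-image-type operation under which EDT0L closure is not available here; and the divisibility filter discards words your system would still produce. This is precisely what the paper flags as the hard part of the proof of Theorem~\ref{quad_eqn_thm}: showing that the linear combinations of Pell solutions form an EDT0L language, and building the division directly into the EDT0L system rather than obtaining it from generic closure properties. Without this step the proposal proves a different statement.

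A smaller but real defect: the table \(\sigma^{-1}\) built from \(M^{-1}=\begin{pmatrix} u_1 & -Dv_1\\ -v_1 & u_1\end{pmatrix}\) would have to send letters to words with negative exponents, so it is not an endomorphism of a free monoid, and the control \(\{\sigma,\sigma^{-1}\}^\ast\tau\) does not make sense as written. The standard fix, and the route taken in the Pell lemma in this section, is to iterate only the positive matrix from finitely many fundamental representatives (adding conjugate representatives as needed for \(U^2-DV^2=N\)) so as to enumerate the non-negative solutions, and then recover the remaining sign patterns by free monoid homomorphisms and finite unions. Your discriminant case analysis and the complexity discussion (guessing the fundamental solution in binary and verifying it in polynomial space) are reasonable and consistent with the intended argument, but they do not compensate for the missing linear-combination-and-division step.
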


  Proving Theorem \ref{quad_eqn_thm} uses a method of Lagrange to reduce
  arbitrary two-variable quadratic equations to \textit{Pell's equation}, which is
  an equation in the ring of integers of the form
  \[
    X^2 - DY^2 = 1,
  \]
  where \(X\) and \(Y\) are variables, and \(D > 0\) is not a perfect square.
  We insist that \(D > 0\) as if \(D \leq 0\), this has finitely many solutions,
  and we insist \(D\) is not a perfect square, as in this case we can factorise
  the equation to show that we can again only have finitely many solutions.
  To do this, it must first be reduced to the \textit{generalised Pell's
  equation}, which is of the form
  \[
    X^2 - DY^2 = N,
  \]
  where \(N \in \mathbb{Z}\). Lagrange's method shows that the set of
  solutions to an arbitary two-variable quadratic equation \(\mathcal E\)
  in integers is equal to
  \[
    \left\{ \left. \left(\frac{\alpha x + \beta y + \gamma}{\delta}, \
      \frac{x - \lambda}{\mu} \right) \ \right| \
    (x, \ y) \text{ is a solution to } X^2 - DY^2 = N \right\},
  \]
  where \(N, \ D \in \mathbb{Z}\) are computable from \(\mathcal{E}\). The
  proof that solutions to Pell's equation are EDT0L is straightforward, and
  we include the proof below. This can be easily modified for the generalised
  Pell's equation. What is more difficult is showing that the linear combinations
  of the solutions (that is, integers of the form \(\alpha x + \beta y + \gamma\)
  for \(\alpha, \ \beta, \ \gamma \in \mathbb{Z}\)) form an EDT0L language.
  The `division' part of the reduction is done using a construction of the
  EDT0L system, and does not require number theoretic methods.

  We give the proof that the solutions to any Pell's equation form an EDT0L
  language below.

  \begin{lem}
    Let \(X^2 - DY^2 = 1\) be an instance of Pell's equation, where
    \(D \in \mathbb{Z}_{> 0}\) is not a perfect square. Then
    \[
      \{a^x \# a^y \mid x^2 - Dy^2 = 1\}
    \]
    is an EDT0L language over the alphabet \(\{a, \ a^{-1}, \ \#\}\).
  \end{lem}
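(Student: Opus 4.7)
My plan is to construct an explicit EDT0L system for the language $L = \{a^x \# a^y \mid x^2 - Dy^2 = 1\}$. I would first recall the classical parametrisation of the solutions: since $D > 0$ is not a perfect square, the non-negative integer solutions form a sequence $(x_n, y_n)_{n \geq 0}$ with $(x_0, y_0) = (1, 0)$, a fundamental solution $(x_1, y_1)$ with $y_1 > 0$, and the composition rule
\[
  x_{n+1} = x_1 x_n + D y_1 y_n, \qquad y_{n+1} = y_1 x_n + x_1 y_n,
\]
coming from the multiplicativity of the norm $x + y\sqrt{D} \mapsto x^2 - Dy^2$. Every integer solution is then of the form $(\epsilon_1 x_n, \epsilon_2 y_n)$ for some $n \geq 0$ and $\epsilon_1, \epsilon_2 \in \{+1, -1\}$, so it will suffice to produce all four sign patterns on top of the non-negative case.

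For the system itself I would take the extended alphabet $C = \{a, a^{-1}, \#, \alpha_L, \beta_L, \alpha_R, \beta_R, \Gamma\}$, where $\alpha_L, \beta_L$ are dedicated to the left of the future separator, $\alpha_R, \beta_R$ to the right, and $\Gamma$ is the placeholder for $\#$; and the seed $\omega = \alpha_L \, \Gamma \, \alpha_R$, which records the trivial solution $(1,0)$ on both sides. The central endomorphism $f \in \End(C^\ast)$ would be defined by
\begin{align*}
  \alpha_L &\mapsto \alpha_L^{x_1} \beta_L^{y_1}, & \beta_L &\mapsto \alpha_L^{Dy_1} \beta_L^{x_1}, \\
  \alpha_R &\mapsto \alpha_R^{x_1} \beta_R^{y_1}, & \beta_R &\mapsto \alpha_R^{Dy_1} \beta_R^{x_1},
\end{align*}
with $\Gamma$ and every terminal letter fixed. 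The key claim, which I would prove by induction on $n$, is that $(\omega) f^n$ lies in $\{\alpha_L, \beta_L\}^\ast \, \Gamma \, \{\alpha_R, \beta_R\}^\ast$ and contains exactly $x_n$ copies of $\alpha_L$ (respectively $\alpha_R$) and $y_n$ copies of $\beta_L$ (respectively $\beta_R$): the left and right letter counts each transform by the matrix $\left(\begin{smallmatrix} x_1 & Dy_1 \\ y_1 & x_1 \end{smallmatrix}\right)$, which is exactly the Pell composition rule. The \emph{order} of the auxiliary letters on each side will be irrelevant, which is the whole point of this construction.

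For each sign pattern $(\epsilon_1, \epsilon_2) \in \{+1, -1\}^2$ I would then introduce a cleanup endomorphism $g_{\epsilon_1 \epsilon_2}$ given by
\[
  \alpha_L \mapsto a^{\epsilon_1}, \quad \beta_L \mapsto \varepsilon, \quad \Gamma \mapsto \#, \quad \alpha_R \mapsto \varepsilon, \quad \beta_R \mapsto a^{\epsilon_2},
\]
with the convention that $a^{+1} = a$. Because $\beta_L$ is erased on the left and $\alpha_R$ on the right, any interleaving in $(\omega)f^n$ does not survive the cleanup, and
\[
  (\omega) f^n g_{\epsilon_1 \epsilon_2} = a^{\epsilon_1 x_n} \, \# \, a^{\epsilon_2 y_n},
\]
which represents the Pell solution $(\epsilon_1 x_n, \epsilon_2 y_n)$. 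Taking the rational control $\mathcal R = f^\ast \bigl(g_{++} \cup g_{+-} \cup g_{-+} \cup g_{--}\bigr)$, visibly a rational subset of $\End(C^\ast)$, yields an EDT0L system whose accepted language is exactly $L$.

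The main conceptual obstacle I would need to address in the writeup is that an EDT0L endomorphism acts letter by letter, so a single $f$ applied to a sorted block $\alpha_L^{x_n}\beta_L^{y_n}$ does not return a sorted block: the images $f(\alpha_L), f(\beta_L)$ get interleaved in the output. A direct attempt to preserve sorted form forces the count transformation to factor as per-variable scalings, and so cannot realise a genuinely off-diagonal matrix such as the Pell transition matrix. The trick is to use two dedicated pairs of auxiliary letters, one per side of $\Gamma$, so as to decouple the evolution, which only needs the correct counts, from the projection, which reads off $x_n$ and $y_n$ separately via deletion. Once that decoupling is in place, the verification reduces to the straightforward inductive count argument above, i.e.\ to a re-expression of the Pell composition identity.
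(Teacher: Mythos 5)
Your construction is correct and is essentially the paper's own proof: you use the same parametrisation $(x_n,y_n)$ of the nonnegative Pell solutions, realised by a single endomorphism whose letter counts on each side of the separator transform by the matrix $\left(\begin{smallmatrix} x_1 & Dy_1 \\ y_1 & x_1 \end{smallmatrix}\right)$, followed by a final projection that erases one auxiliary letter per side so that interleaving is irrelevant (the paper's $\varphi^\ast\theta$ with alphabet $\{a_x,a_y,b_x,b_y\}$). The only cosmetic difference is that you produce the four sign quadrants inside one system via the four cleanup maps $g_{\epsilon_1\epsilon_2}$, whereas the paper first treats the nonnegative quadrant and then appeals to closure of EDT0L languages under free monoid homomorphisms and finite unions.
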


  \begin{proof}
    The solutions to Pell's equation \(X^2 - DY^2 = 1\) take the form
    \[
      \{(x_n, \ y_n) \mid n \in \mathbb{Z}_{\geq 0}\},
    \]
    where \((x_0, \ y_0) = (1, \ 0)\), \((x_1, \ y_1)\) is the smallest
    solution to \(X^2 - DY^2 = 1\) excepting \((1, \ 0)\), and \((x_n, \ y_n)\)
    for \(n > 1\) is recursively defined by
    \[
      x_n = x_1 x_{n - 1} + D y_1 y_{n - 1}, \quad y_n = y_1 x_{n - 1} +
      x_1 y_{n - 1}.
    \]
    This fact is well-known (see for example Chapter 3 of \cite{quad_dioph_eqns_book}). There has been a significant
    amount of work that uses different methods to compute \((x_1, \ y_1)\) with
    varying degrees of efficiency, however, for the purposes of showing that
    the language is EDT0L, we do not need to compute this. To construct the
    EDT0L system we would, and showing that the space complexity in which the EDT0L
    system can be constructed is nice would require one of these methods. First
    note that it suffices to show that the set of non-negative solutions is
    EDT0L. To see this, we will show that
    \[
      L = \{a^x \# b^y \mid x, \ y \in \mathbb{Z}_{\geq 0}, \ x^2 - Dy^2 = 1\}
    \]
    is EDT0L. Then note that \((x, \ y)\) is a solution if and only if \((x, \
    -y)\) is a solution if and only if \((-x, \ y)\) is a solution if and only
    if \((-x, \ -y)\) is.
    Thus by applying a free monoid homomorphism to map
    \((a, \ b)\) to any of \((a, \ a)\), \((a^{-1}, \ a)\), \((a, \ a^{-1})\)
    and \((a^{-1}, \ a^{-1})\), we can show that the set of solutions lying in
    any quadrant is EDT0L, using Theorem \ref{EDT0L_closure_properties_thm}. We
    can then use Theorem \ref{EDT0L_closure_properties_thm} to show that the
    union of these languages is EDT0L. As this is the set of all solutions,
    this completes the proof.

    We now define our EDT0L system \(L\). Our extended alphabet will be
    \(C = \{a, \ b, \ \#, \ a_x, \ a_y, \ b_x, \ b_y\}\) and our start word will
    be \(a_x \# b_x\). Define \(\varphi \in \End(C^\ast)\) by
    \begin{align*}
      a_x \varphi & = a_x^{x_1} a_y^{D y_1}
      & b_x \varphi = b_x^{x_1} b_y^{D y_1} & \\
      a_y \varphi & = a_x^{y_1} a_y^{x_1}
      & b_y \varphi = b_x^{y_1} b_y^{x_1}. &
    \end{align*}
    Then define \(\theta \in \End(C^\ast)\) by
    \begin{align*}
      a_x \varphi & = a
      & b_x \varphi = \varepsilon \\
      a_y \varphi & = \varepsilon
      & b_y \varphi = b.
    \end{align*}

    Our rational control is \(\varphi^\ast \theta\) (see Figure
    \ref{Pell_eqn_fig}). By construction, if \(n \in \mathbb{Z}_{\geq 0}\),
    then \((a_x \# b_x) \varphi^n\) will be of the form \(u \# v\),
    where \(u\) contains precisely
    \(x_n\) occurrences of \(a_x\) and \(y_n\) occurrences of \(a_y\), and
    \(v\) contains precisely \(x_n\) occurrences of \(b_x\) and \(y_n\) of
    \(b_y\). Thus \((a_x \# b_x) \varphi^n \theta = a^{x_n} \# b^{y_n}\),
    and we have shown that our EDT0L system accepts \(L\).

    \begin{figure}
      \begin{tikzpicture}
        [scale=.8, auto=left,every node/.style={circle}]
        \tikzset{
        on each segment/.style={
          decorate,
          decoration={
            show path construction,
            moveto code={},
            lineto code={
              \path [#1]
              (\tikzinputsegmentfirst) -- (\tikzinputsegmentlast);
            },
            curveto code={
              \path [#1] (\tikzinputsegmentfirst)
              .. controls
              (\tikzinputsegmentsupporta) and (\tikzinputsegmentsupportb)
              ..
              (\tikzinputsegmentlast);
            },
            closepath code={
              \path [#1]
              (\tikzinputsegmentfirst) -- (\tikzinputsegmentlast);
            },
          },
        },
        mid arrow/.style={postaction={decorate,decoration={
              markings,
              mark=at position .5 with {\arrow[#1]{stealth}}
            }}},
      }

        \node[draw] (q0) at (0, 0) {\(q_0\)};
        \node[draw, double] (q1) at (5, 0)  {\(q_1\)};

        \draw[postaction={on each segment={mid arrow}}] (q0) to (q1);

        \draw[postaction={on each segment={mid arrow}}] (q0) to
        [out=140, in=-140, distance=2cm] (q0);

        \node (l1) at (-1.9, 0) {\(\varphi\)};

        \node (l2) at (2.4, 0.4) {\(\theta\)};

      \end{tikzpicture}
            \caption{Rational control for \(L = \{a^x \# b^y \mid (x, \ y) \in
      \mathbb{Z}_{\geq 0}^2 \text{ is a solution to } X^2 - DY^2 = 1\}\), with
      start state \(q_0\) and accept state \(q_1\).}
      \label{Pell_eqn_fig}
    \end{figure}
  \end{proof}

\section{Closure properties}\label{sec:closure}

  We summarise the group theoretic constructions that preserve the
  property of having EDT0L solution languages to systems of equations. It
  remains open whether this property is stable under many other
  constructions, such as passing to finite index overgroups.

  \begin{theorem}[\cite{EDT0L_extensions}, Theorem
  A]
    Let \(G\) and \(H\) be groups where solution languages to systems of
    equations are EDT0L, with respect to normal forms \(\eta_G\) and
    \(\eta_H\), respectively, and these EDT0L systems are constructible in
    \(\ns(f)\), for some \(f\). Then in the following groups, solutions to
    systems of equations     are EDT0L, and an EDT0L system can be constructed
    in non-deterministic \(f\)-space:
    \begin{enumerate}
      \item \(G \wr F\), for any finite group \(F\);
      \item \(G \times H\);
      \item Any finite index subgroup of \(G\);
    \end{enumerate}
    In the following groups, solutions to systems of equations are EDT0L, and
    an EDT0L system can be constructed in \(\ns(n^4 \log n)\):
    \begin{enumerate}[resume]
      \item Any group that is virtually a direct product of hyperbolic
      groups;
      \item Dihedral Artin groups.
    \end{enumerate}
    If \(\eta_G\) and \(\eta_H\) are both quasigeodesic or regular, then the
    same will be true for the normal forms used in (1), (2) and (3). It is
    possible to choose normal forms for the groups that are virtually direct
    products of hyperbolic groups (4), and dihedral Artin groups in (5) that
    are regular and quasigeodesic.
  \end{theorem}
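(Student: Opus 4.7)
The plan is to establish the three basic constructions (1), (2), (3) by direct reductions, and then combine them together with an additional lift argument to deduce (4) and (5). The main tools are the closure of EDT0L under finite unions, concatenation, and free-monoid homomorphic images (Theorem \ref{EDT0L_closure_properties_thm}), together with the closure of EDT0L-solvability under the addition of recognisable constraints stated earlier in the paper.

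For the direct product (2), every element of \(G \times H\) splits as a pair, so each variable \(X_i\) decomposes as \((X_i^G, X_i^H)\) and a system \(\mathcal{E}\) in \(G \times H\) decouples into independent systems over \(G\) and over \(H\). Solutions to \(\mathcal{E}\) are Cartesian products of the component solutions; using a product normal form (and the separator \(\#\)), the combined language is obtained from the two EDT0L solution languages via concatenation and a free-monoid homomorphism, both of which preserve EDT0L. For finite index subgroups (3), given a system in \(H \leq G\), view it as a system in \(G\) with the recognisable constraint \(X_i \in H\) for each \(i\) (valid since finite index subgroups of any group are recognisable). The resulting EDT0L language in the generators of \(G\) is rewritten into Schreier generators of \(H\) by a letter-to-word free-monoid substitution. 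For wreath products (1), use the semidirect decomposition \(G \wr F = G^F \rtimes F\); each variable is encoded by an \(F\)-coordinate (finitely many values, which we guess) and by \(|F|\) \(G\)-coordinates. For each guess, the \(F\)-action permutes coordinates and the equation reduces to a system over \(G^F\), so iterating (2) gives an EDT0L language over \(G\). A finite union over all guesses preserves EDT0L, and since we only carry a single guess at a time, the space overhead is absorbed into \(\ns(f)\).

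For (4), let \(N = G_1 \times \cdots \times G_k\) be a finite index normal subgroup of \(G\) with each \(G_i\) hyperbolic. By the hyperbolic case (Table~\ref{table:results_free_hyp}) together with (2), \(N\) has EDT0L solution languages, and the hyperbolic algorithm extends to equations with constants in \(G\) because conjugation by a coset representative acts by automorphism preserving the direct product decomposition and the hyperbolic structure on each factor. Given a system in \(G\) with variables \(X_1, \dots, X_n\), guess a transversal representative \(t_i\) for each \(X_i\), set \(X_i = t_i Y_i\) with \(Y_i \in N\), and use the normality of \(N\) to move the \(t_i\)'s to one side, obtaining a system in \(N\) with constants in \(G\). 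A finite union over the \(|G/N|^n\) guesses of EDT0L languages is EDT0L. For (5), dihedral Artin groups are virtually a direct product of two free groups, and free groups are hyperbolic, so (4) applies.

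\textbf{Main obstacle.} The subtlest step is the lift in (4) from \(N\) to the overgroup \(G\), since closure under finite index overgroups is open in general. The argument succeeds here only because the direct-product structure is preserved by conjugation in \(G\) and each factor is hyperbolic, so a modified hyperbolic algorithm handles equations with constants in \(G\) uniformly. The other technical point is producing normal forms on \(G\) that are simultaneously regular and quasi-geodesic, via Schreier-type transversals taken relative to quasi-geodesic normal forms on the factors. Tracking the space through the quadratic blow-up inherent to the hyperbolic algorithm (where the lifted system has size \(\Oh(n^2)\)) yields the stated \(\ns(n^4 \log n)\) bound.
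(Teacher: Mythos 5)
Your reduction of the group theory is broadly in the right spirit, but at three points you appeal to closure properties that do not do the work you claim, and these are exactly the places where the actual proof needs its key technical input. First, for \(G \times H\): the solution language you must produce is the \emph{interleaving} \(\{u_1v_1\#\cdots\#u_nv_n\}\) of the two component solution languages \(\{u_1\#\cdots\#u_n\}\) and \(\{v_1\#\cdots\#v_n\}\), and this is not obtained by ``concatenation and a free-monoid homomorphism'': concatenation yields \(u_1\#\cdots\#u_n v_1\#\cdots\#v_n\), and a homomorphism acts letter by letter, so it cannot move the block \(v_1\) next to \(u_1\), etc. This synchronised parallel composition is precisely what Lemma \ref{n_hash_sep_lem} (\(\#\)-separated systems) and Lemma \ref{intermesh_EDT0L_lem} are for -- one runs the two rational controls in parallel on a combined seed word, which requires systems that never rewrite \(\#\). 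Your wreath-product argument inherits the same gap, since it iterates (2). Second, for finite-index subgroups: the passage from \(\eta\)-normal forms over \(\Sigma^\pm\) to the Schreier normal form \eqref{Schreier_nrom_form_eqn} is \emph{not} a letter-to-word free-monoid substitution -- the image of the letter \(a_i\) is \(t_{i-1}a_i\overline{t_{i-1}a_i}^{-1}\), which depends on the transversal element \(t_{i-1}\) determined by the prefix read so far. Theorem \ref{EDT0L_closure_properties_thm}(5) therefore does not apply (the rewriting is a state-dependent transduction, and EDT0L is not among the listed closures for such maps); the proof of Proposition \ref{fin_index_EDT0L_prop} instead builds the coset-tracking into the EDT0L system itself, by indexing the extended alphabet with transversal elements.

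Third, and most seriously, your lift in (4) from \(N = G_1\times\cdots\times G_k\) to \(G\) does not go through as described. After substituting \(X_i = t_iY_i\) and pushing the transversal elements to one side using normality, the variables \(Y_i\) appear conjugated by words in coset representatives, i.e.\ twisted by automorphisms of \(N\) that are in general not inner in \(N\); moreover such automorphisms need not preserve the direct-product decomposition (they can permute or mix the factors), so the claim that conjugation ``preserves the direct product decomposition and the hyperbolic structure on each factor'' is false in general and, even when each factor is preserved, what results is a system of \emph{twisted} equations over hyperbolic groups, which is not covered by the plain hyperbolic-group result you cite. Handling these twisted systems is exactly the content of \cite{EDT0L_extensions} for parts (4) and (5) (and the reason closure under finite-index overgroups remains open in general); asserting that ``a modified hyperbolic algorithm handles equations with constants in \(G\) uniformly'' assumes the theorem's hardest step rather than proving it. The complexity bookkeeping you sketch is reasonable, but it cannot be assessed until these three constructions are replaced by correct ones.
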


  Proving (1) and (2) relies on a technical lemma about EDT0L languages where
  every word contains the same number occurrences of the letter \(\#\).
  It shows that such EDT0L systems can be generated using a well-behaved EDT0L
  system, that allows two such languages to be `concatenated in parallel', as
  is required for direct product constructions. We must first formally
  define the property that such EDT0L systems will have to allow for
  concatenation in parallel.

  \begin{dfn}
    Let \(\Sigma\) be an alphabet, and \(\#_1, \ \ldots, \ \#_n \in \Sigma\). A
    \textit{\((\#_1, \ \ldots, \ \#_n)\)-separated EDT0L system} is an EDT0L
    system \(\mathcal{H}\), with a start word of the form \(\omega_0 \#_1
    \omega_1 \#_2 \cdots \#_n \omega_n\), where \(\omega_i \in (\Sigma
    \backslash \{\#_1, \ \ldots, \ \#_n\})^\ast\) for all \(i\), and such that
    \(\#_i \phi^{-1} = \{\#_i\}\) for all \(i\) and every \(\phi\) in the
    rational control.
  \end{dfn}

  \begin{lem}[\cite{EDT0L_extensions}, Lemma 3.4]
    \label{n_hash_sep_lem}
    Let \(L\) be an EDT0L language, such that every word in \(L\) contains
    precisely \(n\) occurrences of the letter \(\#\), where \(n \in
    \mathbb{Z}_{\geq 0}\). Let \(f \colon \mathbb{Z}_{\geq 0} \to
    \mathbb{Z}_{\geq 0}\). Then
    \begin{enumerate}
      \item There is a \((\#, \ \ldots, \ \#)\)-separated EDT0L system
      \(\mathcal H\) that accepts \(L\).
      \item If an EDT0L system for \(L\) is constructible in \(\ns(f)\), then
      \(\mathcal H\) is constructible in \(\ns(f)\).
    \end{enumerate}
  \end{lem}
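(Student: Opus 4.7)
The plan is to pre-compute where the $n$ copies of $\#$ will sit in the final word by annotating every letter of the extended alphabet with the range of blocks its eventual subtree will occupy, and then splitting multi-block letters across the $n$ fixed separators of a new start word. Given an EDT0L system $\mathcal{H} = (\Sigma, C, \omega, \mathcal{R})$ for $L$, notice that in any accepting derivation the final word $w$ is canonically partitioned into $n+1$ maximal $\#$-free blocks, and each occurrence of a letter in every intermediate word of the derivation expands, under the remaining endomorphisms, into a contiguous factor of $w$ supported on some consecutive block range $[i,j] \subseteq \{0, \dots, n\}$. These ranges are what I want to carry as labels throughout.

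Concretely, I let the new extended alphabet be $C' = C \cup \{\bot_0, \dots, \bot_n\} \cup \{c^{[i,j]}_t : c \in C,\ 0 \le i \le t \le j \le n\}$, and take the new start word to be $\bot_0 \# \bot_1 \# \cdots \# \bot_n$. The new rational control is a concatenation of three pieces. First, a launch phase: for each annotation of the letters of $\omega$ by consecutive ranges $[i_s, j_s]$ summing up to $[0, n]$ under which every literal $\#$ inside $\omega$ increments the block index by one, add a non-deterministic initial endomorphism $\psi$ sending each $\bot_t$ to the ordered concatenation of the block-$t$ copies of those annotated letters of $\omega$ whose range contains $t$. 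Second, take the FSA for $\mathcal{R}$ and replace every edge labelled by an endomorphism $\phi$ with one edge per annotation of the images $c\phi$: writing $c\phi = u_0 \# u_1 \# \cdots \# u_m$, choose a range for each non-$\#$ letter so that the ranges glue consistently with $c$'s assumed range $[i,j]$ and with the $m$ explicit $\#$'s as block boundaries, and define $c^{[i,j]}_t \phi'$ as the ordered concatenation of the block-$t$ copies of the annotated non-$\#$ letters of $c\phi$ whose range contains $t$, with $\# \phi' = \#$. Third, append a single cleanup endomorphism $\rho$ sending $c^{[t,t]}_t \mapsto c$ for $c \in \Sigma \setminus \{\#\}$, $\# \mapsto \#$, and every remaining extended letter to a junk symbol outside $\Sigma$. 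Since $\# \phi'^{-1} = \{\#\}$ holds for every $\phi'$ built this way and the start word has the required shape, the output is $(\#, \dots, \#)$-separated.

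The main obstacle is checking $L(\mathcal{H}') = L$. The inclusion $L(\mathcal{H}') \subseteq L$ is immediate: ignoring annotations in any derivation of $\mathcal{H}'$ yields a legal derivation of the same final word in $\mathcal{H}$. For $L \subseteq L(\mathcal{H}')$, start with any derivation $\omega \xrightarrow{\phi_1} \cdots \xrightarrow{\phi_k} w$ of $w \in L$ in $\mathcal{H}$ and decorate every occurrence of every letter of every intermediate word by the block range of its eventual subtree in $w$. One then checks inductively that the annotations on $\phi_l$'s inputs and outputs satisfy the gluing constraints above, so the annotated versions $\phi'_l \in \mathcal{R}'$, and prepending the $\psi$ realising the annotation on $\omega$ and appending $\rho$ produces $w$ in $\mathcal{H}'$.

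For (2), $|C'|$ is polynomial in $n$ and $|C|$, each annotated image can be produced by non-deterministically guessing a polynomial-size annotation on the work tape, and the transformation of the FSA consists of adding the launch and cleanup edges and substituting each existing edge by one of its annotated copies. Hence the whole construction runs the original one as a subroutine, with at most polynomial work-tape overhead for tracking and guessing annotations, which is absorbed by $\ns(f)$ in the regimes relevant to the paper's applications.
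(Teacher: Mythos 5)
The overall strategy --- annotate each letter occurrence by the interval of \(\#\)-blocks its eventual expansion will occupy, split a letter of range \([i,j]\) into per-block pieces \(c^{[i,j]}_t\), put all \(n\) separators into the new start word, and guess annotations non-deterministically along the rational control --- is a reasonable way to prove the lemma, and your completeness argument (decorate a genuine derivation by actual block ranges, using that all occurrences of a letter have the same expansion under the remaining tables) is the right one. However, as written the construction mishandles occurrences of the letter \(\#\) inside \(\omega\) or inside images \(c\phi\) of the \emph{original} system: you declare every such explicit \(\#\) to be a block boundary of the final word and set \(\#\phi'=\#\), which tacitly assumes that any \(\#\) appearing at any stage of an original derivation survives unchanged into the accepted word. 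A general EDT0L system for \(L\) need not satisfy this. Concretely, take \(\Sigma=C=\{a,\#\}\), \(\omega=\#\), and a rational control consisting of a single edge labelled \(\phi\) with \(\#\phi=a\#a\), \(a\phi=a\); then \(L=\{a\#a\}\) and \(n=1\), but your launch phase freezes the lone \(\#\) of \(\omega\) as the separator, \(\phi'\) fixes it, and your system accepts exactly \(\{\#\}\), so both inclusions \(L\subseteq L(\mathcal H')\) and \(L(\mathcal H')\subseteq L\) fail. The gap is fixable, but the fix has to be part of the proof: e.g.\ first rename \(\#\) to a fresh non-terminal \(h\) throughout the given system and append a final table \(h\mapsto\#\) (or, equivalently, give \(\#\)-occurrences in images annotated copies like any other letter, identifying only the surviving ones with separators), and only then run your annotation scheme.

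A second, smaller issue is your claim that \(L(\mathcal H')\subseteq L\) is ``immediate by ignoring annotations''. Sentential forms of \(\mathcal H'\) are not annotated copies of sentential forms of \(\mathcal H\): letters are split across blocks and the \(\#\)'s are pre-placed, so forgetting annotations does not yield a derivation of the same word. You need the reassembly invariant (block \(t\) of the current word of \(\mathcal H'\) equals the block-\(t\) part of the split of the corresponding word of \(\mathcal H\) with respect to some annotation), and for that invariant you must specify what \(\phi'\) does to \(c^{[i,j]}_t\) when \(c\phi\) admits \emph{no} annotation gluing to \([i,j]\) (for instance \(c\phi=\varepsilon\) with \(j>i\)): such letters must be sent to a dead symbol. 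Under the paper's stated convention that unassigned letters are fixed, a ``wrongly guessed'' piece could be revived by a later table and soundness genuinely fails. Finally, the complexity claim in (2) should be argued a little more carefully: one new edge per global annotation choice gives exponentially many edges per original edge, which is acceptable for \(\ns(f)\) only because edges can be printed one at a time while the work tape holds a single original edge together with one annotation guess; this is in the spirit of your last paragraph but is currently asserted rather than shown.
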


  Using Lemma \ref{n_hash_sep_lem}, it is straightforward to prove the
  following. This result shows the fact that direct products preserve the
  property of having EDT0L solution languages to systems of equations. Wreath
  products with finite groups require a little extra work; see
  \cite{EDT0L_extensions} for details.

    \begin{lem}[\cite{EDT0L_extensions}, Lemma 3.5]
    \label{intermesh_EDT0L_lem}
    Let \(L\) and \(M\) be EDT0L languages, such that every word in \(L \cup M\)
    contains precisely \(n\) occurrences of the letter \(\#\). Let \(f \colon
    \mathbb{Z}_{\geq 0} \to \mathbb{Z}_{\geq 0}\). Then
    \begin{enumerate}
      \item The language
      \[
        N = \{u_0 v_0 \# \cdots \# u_n v_n \ | \ u_0 \# \cdots
        \# u_n \in L, \ v_0 \# \cdots \# v_n \in M\},
      \]
      is EDT0L;
      \item If EDT0L systems for \(L\) and \(M\) are constructible in
      \(\ns(f)\), then an EDT0L system for \(N\) is constructible in \(\ns(f)\).
    \end{enumerate}
  \end{lem}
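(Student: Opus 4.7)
The plan is to use Lemma~\ref{n_hash_sep_lem} to replace the given EDT0L systems for $L$ and $M$ with $(\#, \ldots, \#)$-separated ones, and then interleave them block-by-block into a combined system whose rational control simply applies the two old controls independently, one after the other.

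First I would apply Lemma~\ref{n_hash_sep_lem} to obtain $(\#, \ldots, \#)$-separated EDT0L systems $\mathcal{H}_L = (\Sigma, C_L, \omega_L, \mathcal{R}_L)$ and $\mathcal{H}_M = (\Sigma, C_M, \omega_M, \mathcal{R}_M)$ accepting $L$ and $M$ respectively, with start words
\[
  \omega_L = \omega_{L,0} \# \omega_{L,1} \# \cdots \# \omega_{L,n}, \qquad
  \omega_M = \omega_{M,0} \# \omega_{M,1} \# \cdots \# \omega_{M,n},
\]
where every endomorphism in $\mathcal{R}_L \cup \mathcal{R}_M$ has $\#$ as a preimage-fibre of $\{\#\}$ only. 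By part (2) of Lemma~\ref{n_hash_sep_lem}, this can be done in $\ns(f)$.

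Next I would rename the letters of $C_M \setminus \Sigma$ so that $C_L \cap C_M = \Sigma$, set $C = C_L \cup C_M$, and extend each $\phi \in \mathcal{R}_L$ (respectively $\phi \in \mathcal{R}_M$) to an endomorphism of $C^\ast$ by fixing every letter of $C_M \setminus \Sigma$ (respectively $C_L \setminus \Sigma$). Writing $\mathcal{R}'_L, \mathcal{R}'_M \subseteq \End(C^\ast)$ for the rational sets obtained this way, I would define $\mathcal{H}_N$ to have terminal alphabet $\Sigma$, extended alphabet $C$, start word
\[
  \omega_N = \omega_{L,0}\omega_{M,0} \# \omega_{L,1}\omega_{M,1} \# \cdots \# \omega_{L,n}\omega_{M,n},
\]
and rational control $\mathcal{R}_N = \mathcal{R}'_L \cdot \mathcal{R}'_M$. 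Since the class of rational subsets of $\End(C^\ast)$ is closed under concatenation, and renaming and extending endomorphisms are automaton-level operations that use only constant additional working space, the whole construction runs in $\ns(f)$.

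Correctness then follows from the separated structure: given $\phi_L \in \mathcal{R}_L$ and $\phi_M \in \mathcal{R}_M$ with $\omega_L \phi_L = u_0 \# \cdots \# u_n \in L$ and $\omega_M \phi_M = v_0 \# \cdots \# v_n \in M$, the extensions $\phi'_L, \phi'_M$ act on disjoint non-terminal letter sets and each fixes $\#$, whence
\[
  \omega_N (\phi'_L \phi'_M) = u_0 v_0 \# u_1 v_1 \# \cdots \# u_n v_n \in N.
\]
Conversely, any word in $L(\mathcal{H}_N) \cap \Sigma^\ast$ carries the original $n$ $\#$'s in the positions dictated by $\omega_N$, and each block between consecutive $\#$'s decomposes canonically into the part coming from $C_L \setminus \Sigma$-letters and the part coming from $C_M \setminus \Sigma$-letters, so it is of the required form. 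The main technical point I expect to need care with is the possibility that some $\phi \in \mathcal{R}_M$ acts non-trivially on terminal letters in $\Sigma \setminus \{\#\}$ already produced by $\mathcal{R}'_L$; this is handled by a preliminary normalisation (standard for EDT0L systems) pushing all introduction of terminal letters to a single final endomorphism, so that the actions of $\mathcal{R}'_L$ and $\mathcal{R}'_M$ genuinely commute on the intermediate words. The $\#$-preserving property supplied by Lemma~\ref{n_hash_sep_lem} is exactly what guarantees the blockwise alignment survives the interleaving.
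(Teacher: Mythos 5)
Your construction is essentially the route the survey intends (and the one taken in \cite{EDT0L_extensions}): apply Lemma~\ref{n_hash_sep_lem} to both languages, make the two extended alphabets disjoint away from \(\Sigma\), interleave the start words blockwise, and take the concatenation of the two suitably extended rational controls, all of which is a local relabelling/augmentation of the automata and so stays in \(\ns(f)\). The terminal-letter interference you flag is indeed the only genuine subtlety, and your fix --- delaying all production of letters of \(\Sigma\) to a single final endomorphism so that the \(M\)-side maps fix the terminal letters already produced on the \(L\)-side, while the \(L\)-side maps fix the (now nonterminal) \(M\)-blocks of the start word --- is the standard and correct patch, so your argument is sound and matches the paper's approach.
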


  Before we discuss the proof of the fact that finite index subgroups (of finitely generated groups) also
  preserve the property of having EDT0L solution languages, we must first
  define the generating set and normal form. By Schreier's Lemma, we know that a (finite) generating set can be explicitly constructed, and this
  generating set induces a normal form. We give the definition of Schreier generators below,
  but refer the reader to \cite{EDT0L_extensions} for a proof.

  \begin{dfn}
    Let \(G\) be a group, generated by a finite set \(\Sigma\), \(H\) be a
    finite index subgroup of \(G\), and \(T\) be a right transversal of \(H\) in
    \(G\). For each \(g \in G\), let \(\bar{g}\) be the (unique) element of
    \(T\) that lies in the coset \(Hg\). The \textit{Schreier generating set}
    for \(H\), with respect to \(T\) and \(\Sigma\), is defined to be
    \[
      Z = \{tx \overline{tx}^{-1} \mid t \in T, \ x \in \Sigma\}.
    \]
  \end{dfn}

  Given a normal form for a given group with a given finite generating
  set, the proof of Schreier's Lemma also constructs an explicit normal
  form for the finite index subgroup, using the normal form of the
  original group. We give the normal form below. Passing to this normal
  form preserves the properties of being regular and quasigeodesic.

  \begin{dfn}
    \label{Schreier_norm_form_def}
    Let \(G\) be a group, generated by a finite set \(\Sigma\), \(H\) be a
    finite index subgroup of \(G\), and \(T\) be a right transversal of \(H\)
    in \(G\).  Let \(Z\) be the Schreier generating set for \(H\). Fix a normal
    form \(\eta\) for \((G, \ \Sigma)\).

    We define the \textit{Schreier normal form} \(\zeta\) for \((H, \ Z)\),
    with respect to \(\eta\), as follows. Let \(h \in H\), and suppose \(h \eta
    = a_1 \cdots a_n\), where \(a_1, \ \ldots, \ a_n \in \Sigma^\pm\). Let
    \(t_0\) be the unique element of \(T \cap H\), and define \(t_i =
    \overline{a_1 \cdots a_i}\). Define \(h \zeta\) by
    \begin{equation}
      \label{Schreier_nrom_form_eqn}
      h \zeta = (t_0 a_1 \overline{t_0 a_1}^{-1}) (t_1 a_2 \overline{t_1
      a_2}^{-1}) \cdots (t_{n - 1} a_n \overline{t_{n - 1}a_n}^{-1}).
    \end{equation}
    The fact that this indeed defines an element of \(H\), and equals \(h\), is
   part of the proof of Schreier's Lemma.
  \end{dfn}

  The proof of the following relies on indexing the letters in the extended
  alphabet with generators of \(G\) and elements of a fixed right
  transversal, to keep track of the coset our element lies in,
  to ensure we stay within the normal form.

  \begin{proposition}[\cite{EDT0L_extensions},
  Proposition 6.3]
    \label{fin_index_EDT0L_prop}
    Let \(f \colon \mathbb{Z}_{\geq 0} \to \mathbb{Z}_{\geq 0}\). Let \(G\)
    be a group where solutions to systems of equations are EDT0L in \(\ns(f)\),
    with respect to a normal form \(\eta\). Then the same holds in any finite
    index subgroup of \(G\) with respect to the Schreier normal form, inherited
    from \(\eta\).
  \end{proposition}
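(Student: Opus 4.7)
The plan is to reduce solving equations in $H$ to solving equations in $G$ with a recognisable constraint, then to convert $\eta$-normal forms into $\zeta$-normal forms inside the EDT0L framework by indexing letters with elements of the transversal $T$.

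Given a system $\mathcal E$ of equations in $H$ on variables $X_1,\dots,X_n$, I would view it as a system in $G$ (its constants lie in $H \subseteq G$) together with the recognisable constraint $X_i \in H$ for each $i$; this is recognisable because $H$ has finite index in $G$. By the earlier discussion that recognisable constraints preserve EDT0L solvability in the same space complexity, the hypothesis yields an EDT0L system, constructible in $\ns(f)$, for
\[
  L = \{(g_1\eta)\#\cdots\#(g_n\eta) \mid (g_1,\dots,g_n) \text{ solves } \mathcal E \text{ in } H\}.
\]

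The core task is then to rewrite $L$ as $L' = \{(g_1\zeta)\#\cdots\#(g_n\zeta) \mid (g_1,\dots,g_n) \text{ solves } \mathcal E \text{ in } H\}$ inside EDT0L. The pointwise conversion from $g\eta$ to $g\zeta$, given by \eqref{Schreier_nrom_form_eqn}, reads $a_1\cdots a_m$ over $\Sigma^\pm$ left-to-right and replaces $a_i$ by the Schreier generator $t_{i-1}a_i\overline{t_{i-1}a_i}^{-1} \in Z^\pm$, where $t_0 = 1_T$ and $t_i = \overline{t_{i-1}a_i}$. This is a deterministic letter-to-letter finite-state transduction with state set $T$, and it is \emph{not} a free monoid homomorphism. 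To implement it inside EDT0L, I would enlarge the terminal alphabet to $(\Sigma^\pm \times T) \sqcup \{\#\}$ and rebuild the system so that each terminal letter $a$ is produced together with a nondeterministic guess of a tag $t\in T$; this yields an EDT0L language $\widetilde L$ containing all possible taggings of all words in $L$. Intersecting $\widetilde L$ with the regular language $R$ enforcing the local consistency conditions $t_0 = 1_T$ and $t_i = \overline{t_{i-1}a_i}$ within each block preserves the EDT0L property and the $\ns(f)$ bound by Theorem~\ref{EDT0L_closure_properties_thm}(2). Finally, applying the free monoid homomorphism $(a, t) \mapsto ta\overline{ta}^{-1}\in Z^\pm$ (fixing $\#$) via Theorem~\ref{EDT0L_closure_properties_thm}(5) produces $L'$ in $\ns(f)$.

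The main obstacle is constructing the tagged language $\widetilde L$ faithfully. A single EDT0L endomorphism rewrites all occurrences of a letter identically, so one cannot simply tag independently each terminal letter produced by one endomorphism application. The remedy is to first put the EDT0L system for $L$ into a normal form in which each endomorphism produces at most one terminal letter per rewritten non-terminal symbol---by introducing intermediate non-terminals that linearise the right-hand sides, blowing up the system by a constant factor---and then to duplicate each such terminal-producing edge of the rational control $|T|$ times, once per choice of tag. Since $|T|$ depends only on $G$ and $H$, the rational control grows by a constant factor and the space complexity remains $\ns(f)$. Verifying that every tagging of every word in $L$ arises this way, and that no spurious words slip through after intersection with $R$, is routine but essential.
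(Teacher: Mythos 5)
Your first reduction (rewrite the constants over \(\Sigma\), view the system in \(G\), impose the recognisable constraints \(X_i \in H\), and use the fact that recognisable constraints preserve EDT0L solution languages in the same space bound) is exactly how the cited proof begins. The gap is in the second half, the passage from \(\eta\)-normal forms to Schreier normal forms. Your intermediate language \(\widetilde L\) of \emph{all} taggings of words of \(L\) is in general not EDT0L, and the paper's own examples already show this: \(\{a^{n^2} \mid n > 0\}\) is EDT0L, but the set of all its taggings by a two-element tag set, namely \(\{w \in \{a, b\}^\ast \mid |w| = n^2\}\), is not EDT0L. So no construction can realise \(\widetilde L\) as an EDT0L language, and the one you describe does not even produce all the taggings you need: after linearising right-hand sides and duplicating each terminal-producing edge once per tag, a single endomorphism still rewrites \emph{all} occurrences of a given extended letter identically, so terminal occurrences produced in parallel (from different occurrences of the same nonterminal, possibly in different \(\#\)-blocks or at different positions) are forced to receive the same tag. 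The unique coset-consistent tagging of a word typically assigns \emph{different} transversal elements to such parallel occurrences (for instance a system with seed \(a \perp \# \perp\) and productions \(\perp \mapsto x\perp\), with \(H\) of index \(2\), forces corresponding produced letters in the two blocks to have prefixes of different parity). For such a system the consistent tagging of a solution is never generated, it is then discarded by the intersection with \(R\), and the final language misses solutions: completeness fails. Since the EDT0L system for \(L\) is a black box, you cannot rule this behaviour out, and Theorem~\ref{EDT0L_closure_properties_thm} gives you no closure property (inverse homomorphism, finite substitution, or deterministic transduction) that would bridge this.

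The repair, which is what the cited proof does (and what the remark before the proposition hints at), is to push the transversal data into the \emph{extended} alphabet rather than tagging terminals at the moment of production: replace each letter \(c\) of the extended alphabet by indexed copies \((c, t)\) (or \((c, t, t')\)) recording a guess of the coset at which the terminal subword eventually derived from that occurrence begins (and ends), let the rational control make and propagate these guesses consistently when letters are created, and only at the end map an indexed terminal \((a, t)\) to the Schreier generator \(ta\overline{ta}^{-1}\). Determinism is then harmless: in an EDT0L derivation two occurrences of the same extended letter derive the same terminal subword, so if they carry the same coset index they require the same tags, while occurrences needing different cosets are distinct indexed letters. The blow-up is bounded by a constant depending only on \([G : H]\) and \(|\Sigma|\), so the \(\ns(f)\) bound survives; your closure-property route, as written, does not go through.
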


\section{Baumslag-Solitar groups}
  \label{sec:BS}

Duncan, Evetts, Rees and Holt \cite{DuncanEvettsHoltRees} proved that various
equations in soluble Baumslag-Solitar groups have EDT0L languages with respect
to a standard normal form, including multiplication `triples' and centralisers
of fixed elements. A \textit{Baumslag-Solitar group} is a group defined by
the presentation
\[
  \text{BS}(j, \ k) = \langle a, \ b \mid b^{-1} a^j b = a^k \rangle,
\]
for some \(j, \ k \in \mathbb{Z} \backslash \{0\}\). These are soluble precisely
when \(j = 1\) or \(k = 1\), so we will (without loss of generality) assume \(j
= 1\). If \(k = \pm 1\), then this group is virtually abelian, and so need not be
considered here. In addition, the authors in \cite{DuncanEvettsHoltRees} assume
\(k > 1\), although they conjecture that the proofs will hold up when \(k <
-1\), modulo some technicalities.

We now define the normal form that is used to show solution languages to certain
equations are EDT0L.

\begin{lem}[\cite{DuncanEvettsHoltRees}, Section 3]
\label{lem:BSnormform}
  Every element in \(\text{BS}(1, \ k)\) is uniquely expressible in the form
  \[
    b^t \alpha^{i_m} b^{-1} \alpha^{i_m - 1} b^{-1} \cdots \alpha^{i_1} b^{-1}
    \alpha^s,
  \]
  where \(t \in \mathbb{Z}\), \(m, \ s \in \mathbb{Z}_{\geq 0}\), \(i_j \in
  \{0, \ \ldots, \ k - 1\}\) for all \(j\), with \(i_m \neq 0\) whenever
  \(m > 0\).
\end{lem}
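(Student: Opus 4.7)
The plan is to exploit the well-known identification of $\text{BS}(1,k)$ with the semidirect product $\mathbb{Z}[1/k] \rtimes \mathbb{Z}$, in which the generator of $\mathbb{Z}$ acts on $\mathbb{Z}[1/k]$ by multiplication by $k^{-1}$. Under the isomorphism $a \mapsto (1, 0)$, $b \mapsto (0, 1)$, the defining relation $b^{-1}ab = a^k$ is exactly the action formula, and every element of $\text{BS}(1,k)$ is represented uniquely by a pair $(r, \tau)$ with $r \in \mathbb{Z}[1/k]$ and $\tau \in \mathbb{Z}$. The lemma then reduces to showing that the proposed normal forms parameterise such pairs bijectively.

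For existence, I would start from an arbitrary $(r, \tau)$ and read off its normal form from the base-$k$ expansion of $r$. Every $r \in \mathbb{Z}[1/k]$ admits a unique presentation $r = s + \sum_{j=1}^{m} i_j k^{-j}$ with $i_j \in \{0, \ldots, k-1\}$, integer part $s$, and $m$ minimal (so either $m = 0$, or $m \geq 1$ and $i_m \neq 0$); this is the usual $k$-ary expansion of the fractional part of $r$ with no trailing zeros. Setting $t = \tau + m$, the candidate normal form is $b^t a^{i_m} b^{-1} a^{i_{m-1}} b^{-1} \cdots a^{i_1} b^{-1} a^s$. To verify this represents $(r,\tau)$, I would apply the rewriting rule $b^{-1}a = a^k b^{-1}$ (immediate from the presentation) repeatedly to push every $b^{-1}$ past the $a$-blocks to the right, obtaining
\[
  b^t a^{i_m} b^{-1} a^{i_{m-1}} b^{-1} \cdots a^{i_1} b^{-1} a^s = b^t a^A b^{-m}, \quad A = i_m + k i_{m-1} + \cdots + k^{m-1} i_1 + k^m s.
\]
A direct computation in the semidirect product then gives $b^t a^A b^{-m} = (A/k^t,\; t - m)$, and by construction $A/k^t = r$ and $t - m = \tau$.

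For uniqueness, suppose two normal-form tuples $(t, m, \mathbf i, s)$ and $(t', m', \mathbf i', s')$ define the same element. Matching second semidirect coordinates gives $t - m = t' - m'$, while matching first coordinates gives $A/k^t = A'/k^{t'}$. Since both sides encode base-$k$ expansions of the same $r \in \mathbb{Z}[1/k]$ and the ``no trailing zero'' condition $i_m \neq 0$ for $m > 0$ pins down which expansion is used, uniqueness of the $k$-ary expansion forces $m = m'$ and the digits to agree; the remaining equalities $t = t'$ and $s = s'$ then follow.

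The only real obstacle is bookkeeping of the $k$-ary expansion, which is elementary; the condition $i_m \neq 0$ (for $m > 0$) is exactly what prevents the ``$0.9999\ldots = 1.0000\ldots$''-style ambiguity. One minor subtlety worth flagging: elements $(r, \tau)$ with $r < 0$ require a negative integer part, so the stated constraint ``$s \in \mathbb{Z}_{\geq 0}$'' appears to be a typo and should read $s \in \mathbb{Z}$; this has no effect on the argument above.
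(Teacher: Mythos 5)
The survey itself gives no proof of this lemma (it is quoted from Section 3 of the cited paper of Duncan--Evetts--Holt--Rees), so there is no in-paper argument to compare against; your route through the identification $\mathrm{BS}(1,k)\cong\mathbb{Z}[1/k]\rtimes\mathbb{Z}$ together with finite base-$k$ expansions is the natural one and is essentially how this normal form is justified in the literature. You are also right to flag the constraint on $s$: as literally stated the lemma fails, since every word of the displayed shape equals $b^t a^A b^{-m}$ with $A=i_m+ki_{m-1}+\cdots+k^{m-1}i_1+k^m s$, which is nonnegative when $s\geq 0$ and all $i_j\geq 0$, so for instance $a^{-1}$ would have no normal form; the final exponent must range over all of $\mathbb{Z}$, exactly as you say.

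There is, however, a bookkeeping slip in your existence verification that you should repair. With your conventions, where $(r,\tau)$ denotes the element with $\mathbb{Z}[1/k]$-part $r$ followed by $b^{\tau}$, the formula $b^t a^A b^{-m}=(A/k^t,\,t-m)$ is correct; but if the digits and integer part are read off the expansion of $r$ itself, then $A=k^m r$ and $t=\tau+m$, so $A/k^t=k^{-\tau}r$, and the claim ``by construction $A/k^t=r$'' holds only when $\tau=0$. The fix is one line: take the base-$k$ expansion of $k^{\tau}r$ (this determines $s$, $m$ and the digits, and then $A=k^t r$ as required), or equivalently first handle elements of the normal closure of $a$ (the case $\tau=0$) and note that left multiplication by $b^{\pm1}$ only shifts $t$ and preserves the normal-form shape. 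Uniqueness should be phrased with the same correction: after matching the $b$-exponents, the two digit strings are finite base-$k$ expansions of the common value $k^{\tau}r$, and uniqueness of such expansions (your ``no trailing zeros'' condition $i_m\neq 0$) forces $m=m'$, equal digits, $s=s'$ and hence $t=t'$, as you argued.
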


\begin{theorem}[\cite{DuncanEvettsHoltRees}]
  Let \(G = B(1, \ k)\) for some \(k\). Then, all with respect to the
  normal form in Lemma \ref{lem:BSnormform},
  \begin{enumerate}
    \item The solution language to \(XY = Z\) is EDT0L;
    \item The solution language to \(XYZ = 1\) is EDT0L;
    \item If \(g \in G\), then the solution language to \(Xg = gX\) is EDT0L.
  \end{enumerate}
\end{theorem}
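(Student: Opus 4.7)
The plan rests on the standard isomorphism $\BS(1,k) \cong \mathbb{Z}[1/k] \rtimes \mathbb{Z}$, under which an element corresponds to a pair $(r,T)$ with $r \in \mathbb{Z}[1/k]$, $T \in \mathbb{Z}$, and multiplication is
\[
  (r_1,T_1)(r_2,T_2) = (r_1 + k^{-T_1} r_2,\; T_1 + T_2).
\]
Read through this lens, the normal form of Lemma~\ref{lem:BSnormform} encodes $(r,T)$ by a shift $t$ and a finite base-$k$ expansion of $|r|k^t \in \mathbb{Z}$: the string $i_1, \ldots, i_m$ (with $i_m \neq 0$) supplies the low digits, $s$ absorbs the high part, and $T = t - m$. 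Thus the whole argument reduces to showing that certain languages of base-$k$ expansions, constrained by relations that are finite-state in the digit positions, are EDT0L.

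For (1), the equation $XY = Z$ decouples into $T_X + T_Y = T_Z$ and $r_X + k^{-T_X} r_Y = r_Z$. The first is trivial integer addition; the second is base-$k$ addition of two possibly shifted expansions, and the key point is that it requires only a bounded carry at each digit position, so it is inherently finite-state. The plan is to build an EDT0L system whose extended alphabet contains, besides the terminals, three markers $P_X, P_Y, P_Z$ each carrying a bounded state (current carry plus a phase flag indicating head $b^t$, digit body, or tail $a^s$). The seed word is $P_X \# P_Y \# P_Z$, and the rational control is a finite-state automaton whose edges are labelled by endomorphisms that either extend each partial normal form by one further column---respecting the carry update, the global alignment, and the running $T$-constraint---or terminate by replacing the markers with valid tails. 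The nondeterminism freely chooses the digits of $X$ and $Y$; the digits of $Z$ and the carry are forced. Being accepted by an endomorphism-labelled FSA, the language is EDT0L via the Asveld presentation.

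For (2), the cleanest route is a reduction to (1) via inversion. Since $(r,T)^{-1} = (-k^T r, -T)$, inversion acts on normal forms by a bounded, finite-state transformation of the base-$k$ expansion, so $XYZ = 1$ becomes $XY = Z^{-1}$; alternatively, one repeats the construction of (1) with a triple constraint $r_X + k^{-T_X} r_Y + k^{-(T_X+T_Y)} r_Z = 0$ directly. Closure of the EDT0L class under free-monoid homomorphism and intersection with regular sets (Theorem~\ref{EDT0L_closure_properties_thm}) cleans up the bookkeeping. For (3), a direct calculation from the multiplication rule shows that $Xg = gX$ is equivalent to $r(1 - k^{-T_g}) = r_g(1 - k^{-T})$. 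This splits into three cases: if $g = 1$, the centralizer is all of $G$; if $T_g = 0$ and $g \neq 1$, the centralizer is the abelian subgroup $\{T = 0\} \cong \mathbb{Z}[1/k]$, whose normal forms (those with $t = m$) form a regular language, hence EDT0L; and if $T_g \neq 0$, the centralizer is the infinite cyclic subgroup $\langle g \rangle$, whose powers $g^n = \bigl(r_g (1 - k^{-nT_g})/(1 - k^{-T_g}),\; nT_g\bigr)$ have a self-similar base-$k$ structure that one generates by the same carry-driven EDT0L construction as in (1), iterated via the rule $r_{n+1} = r_n + k^{-nT_g} r_g$ and Kleene-starred over the rational control.

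The main obstacle lies in the EDT0L construction for (1). The carry arithmetic is bounded and innocuous, but the alignment is not: the three integers $t_X, t_Y, t_Z$ (and $m_X, m_Y, m_Z$) can differ substantially, so a naive column-by-column sweep does not correspond to the same digit position in all three normal forms. The extended alphabet and rational control must buffer these offsets by inserting partial moves that advance only some of the markers while holding the others fixed, all while maintaining global consistency of the carry, the $T$-constraint, and the normal-form constraints (such as $i_m \neq 0$ on the leading digit). Getting this bookkeeping right---so that every accepted word is a valid triple of normal forms and every solution appears exactly once---is where the real work lies, and is also the reason the authors of \cite{DuncanEvettsHoltRees} conjecture that the approach does not extend to arbitrary systems of equations in $\BS(1,k)$.
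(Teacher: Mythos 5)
The survey does not actually prove this theorem---it is quoted from \cite{DuncanEvettsHoltRees}---so your sketch can only be compared with the cited work, whose approach (a combinatorial analysis of the normal form of Lemma \ref{lem:BSnormform}, read as a shift \(t\) together with a base-\(k\) expansion, and explicit EDT0L systems whose rational control processes digits with bounded carries) your plan for (1) does broadly match. The genuinely hard part---handling the misaligned parameters \(t_X,t_Y,t_Z\), the unary tails \(a^s\), the sign conventions, and the carry and \(T\)-constraints simultaneously---is exactly the part you defer, so for (1) you have the right plan rather than a proof.

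Two of your steps are wrong as stated. For (2), inversion is \emph{not} a ``bounded, finite-state transformation'' of this normal form: with \(k=2\), the element with normal form \(b^{-5}a^{-3}\) (i.e.\ \((-96,-5)\) in your coordinates) has inverse \((3,5)\), whose normal form is \(b^{5}a^{96}\); in general, passing from \((r,T)\) to \((-k^{T}r,-T)\) trades the shift stored in unary as \(b^{t}\) against the high part stored in unary as \(a^{s}\), so the word length can change exponentially and no finite transducer realises the map. Moreover, even a genuine rational transduction could not simply be applied: the closure properties available for EDT0L (Theorem \ref{EDT0L_closure_properties_thm}) are unions, intersection with regular sets, concatenation, Kleene star and images under free monoid homomorphisms---not inverse homomorphisms or finite-state transductions. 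So the reduction of \(XYZ=1\) to (1) fails, and you must use your fallback, the direct construction with the triple constraint \(r_X+k^{-T_X}r_Y+k^{-(T_X+T_Y)}r_Z=0\). For (3), the case \(T_g\neq 0\) is misstated: the centraliser is infinite cyclic but need not be \(\langle g\rangle\)---for \(g=b^{2}\) it is \(\langle b\rangle\); in general it is the maximal cyclic subgroup containing \(g\), generated by a computable root \(h\) of \(g\) (note also the subcase \(r_g=0\)), and your self-similar digit construction should be run on the powers of \(h\). Both defects are repairable, but as written they are errors, and the core of the theorem---the explicit alignment-and-carry EDT0L system---still remains to be built.
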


With respect to this normal form, Duncan, Evetts, Holt and Rees conjecture that
there exist equations with solution languages that are not EDT0L. This does not
necessarily mean that there will be other regular normal forms for which all
solutions will be EDT0L; however, since a group with a decidable Diophantine
problem, but non-EDT0L solution languages with respect to any `sensible' normal
form has yet to be found, this would be significant progress in using EDT0L
languages to understand solutions to equations. They conjecture that the
solution language to the equation
\[
 XY = YX
\]
is not EDT0L. By intersecting this solution language with the regular
language \(b^\ast a \# b^\ast a^\ast\), showing that this solution
language is not EDT0L is reduced to showing that the language
\[
  \{b^r a \# b^{rn}a^\frac{k^{rn} - 1}{k^r - 1} \mid r, \ n \geq 0\}
\]
is not EDT0L.

\section*{Acknowledgements}
The authors would like to thank George Metcalfe and the  Universität Bern for their hospitality and support during the final part of writing this survey. The first named author acknowledges a Scientific Exchanges grant (number IZSEZ0$\_ $213937) of the Swiss National Science Foundation, and the second named author thanks the Heilbronn Institute for Mathematical Research for support during this work.

Finally, this survey would not have been written without the essential contributions to the area of many mathematicians and valued collaborators: Volker Diekert, Andrew Duncan, Murray Elder, Alex Evetts, Derek Holt, Sarah Rees, to name just a few.

\bibliography{references}
\bibliographystyle{abbrv}
\end{document}